\documentclass[11pt]{amsart}
\usepackage {amsmath, amscd}
\usepackage {amssymb}
\usepackage {bm}

\def\<{\langle}
\def\>{\rangle}

\def\C{{\mathbb{C}}}
\def\D{\mathbb{D}}

\def\T{\mathbb{T}}

\def\HH{{\mathcal H}}

\def\KK{{\mathcal K}}

\def\MM{{\mathcal M}}

\def\YY{{\mathcal Y}}

\def\bbD{\mathbb{D}}

\renewcommand{\leq}{\leqslant}
\renewcommand{\geq}{\geqslant}
\renewcommand{\subset}{\subseteq}

\renewcommand{\Re}{\mathop{\rm Re}}

 \usepackage{palatino}

 \numberwithin{equation}{section}
\theoremstyle{plain}

\newtheorem{corollary}[equation]{Corollary}
\newtheorem{theorem}[equation]{Theorem}
\newtheorem{lemma}[equation]{Lemma}
\theoremstyle{definition}

\newtheorem{example}[equation]{Example}

\newtheorem{remark}[equation]{Remark}
 
\usepackage{enumitem}
\setlist[enumerate]{leftmargin=*}
\setlist[itemize]{leftmargin=*}

\setlist[enumerate,1]{label=(\alph*),font=\upshape}

\setlist[enumerate,2]{label=(\roman*),font=\upshape}

\title[Littlewood in de Branges--Rovnyak spaces]{Littlewood subordination for de Branges--Rovnyak spaces}

\author[Fricain]{Emmanuel Fricain}
 \address{Laboratoire Paul Painlev\'e, Universit\'e de Lille, 59 655 Villeneuve d'Ascq C\'edex }
 \email{emmanuel.fricain@univ-lille.fr}

\author[Hartmann]{Andreas Hartmann}
\address{Univ. Bordeaux, CNRS, Bordeaux INP, IMB, UMR 5251, F-33400, Talence, France}
\email{Andreas.Hartmann@math.u-bordeaux.fr}

\author[Ross]{William T. Ross}
	\address{Department of Mathematics and Statistics, University of Richmond, Richmond, VA 23173, USA}
	\email{wross@richmond.edu}
	
		\author[Timotin]{Dan Timotin}
	\address{Simion Stoilow Institute of Mathematics of the Romanian Academy, Calea Grivi\c tei 21, Bucharest 010702, Romania}
	\email{Dan.Timotin@imar.ro}

\begin{document}

\begin{abstract}
In this paper, we study composition operators that act between different de Branges–Rovnyak spaces. Our main results are suggested by a paper of Mashreghi and Shabankhah concerning composition operators between model spaces. We also answer several related open questions posed by Dellepiane and Seco. To prove these results, we apply reproducing kernel Hilbert space methods, Sarason’s approach to composition operators as integral operators, and Aleksandrov–Clark measures.
\end{abstract}

\maketitle	

\section{Introduction}

Littlewood's subordination principle provides a natural starting point for studying composition operators on various Banach spaces of analytic functions on the open unit disk $\D = \{z \in \C: |z| < 1\}$. The original Littlewood theorem from \cite{MR1574067} (see also \cite[p.~10]{Duren}) states that if $\phi$ is an analytic self-map of $\D$ and a function $f$ belongs to the Hardy space $H^2$ \cite{Duren},  then the composition $f \circ \phi$ also belongs to $H^2$. This implies that  
$$C_{\phi} f := f \circ \phi$$  defines a bounded linear operator  on $H^2$. Moreover, with the additional assumption that $\phi(0) = 0$,  the composition operator $C_{\phi}$ on $H^2$ becomes a contraction.  Littlewood’s theorem has motivated extensive research on the boundedness and compactness of composition operators between various Banach spaces of analytic functions on $\D$ \cite{CMC, Queff, JShap}.

In this paper, we focus on  Littlewood subordination problems for the de Branges--Rovnyak spaces $\mathcal{H}(b)$. Here $b \in H^{\infty}_{1}$, the closed unit ball in the algebra  $H^{\infty}$ of bounded analytic functions on $\D$, and $\HH(b)$ is  the Hilbert space  of analytic functions on $\D$ with the positive semi-definite reproducing kernel 
\begin{equation}\label{KernelHb}
k^{b}_{\lambda}(z) := \frac{1 -  \overline{b(\lambda)} b(z)} {1 - \overline{\lambda} z}, \quad \lambda, z \in \D.
\end{equation}
It is known that $\HH(b)$ is contractively contained in $H^2$. When $b \equiv 0$, then $\HH(b) = H^2$ with equal norms. More generally, if the closure of $b(\D)$ is contained in $\D$, then $\HH(b) = H^2$ with  equivalent norms. 
When $b$ is an inner function, $\HH(b)$ becomes the  model space $\KK_{b} := H^2 \ominus b H^2$. Of course, when $b$ is a unimodular constant function, we have $\HH(b)  \equiv \{0\}$ and so we assume from now on that this is never the case. See \cite{Hb1-book, Hb2-book, MR1289670} for detailed treatments of $\HH(b)$ spaces and \cite{MR3526203} for more on model spaces.

The Littlewood subordination principle, along with the contractive inclusion of $\HH(b)$ in $H^2$,  implies that for any analytic self-map $\phi$ of $\D$, the composition operator $C_\phi$ maps
$\mathcal{H}(b)$ boundedly into $H^2$. Inspired by a discussion from \cite{MR3411049}, concerning the case when $b$ is inner, the authors in \cite{MR3915413} prove some necessary or sufficient conditions for the compactness of $C_\phi:\mathcal H(b)\to H^2$. On the other hand, the question of whether $C_\phi$ defines a bounded operator  from $\mathcal H(b)$ to itself is studied in \cite{MR4653340} when $b \in H^{\infty}_{1}$ is a rational function (and not inner) and for weighted composition operators in \cite{FricainOstermann}.  It turns out  that $C_{\phi}$ maps $\mathcal{H}(b)$ to itself only under severe
restrictions on the symbol $\phi$. 

In this paper, we extend the discussion to general $b, \phi \in H^{\infty}_{1}$, focusing on finding a function $\eta\in H_1^\infty$, depending on  $b$ and $\phi$, for which $C_\phi$  maps $\mathcal{H}(b)$ boundedly to
$\mathcal{H}(\eta)$. Of course, one can take $\eta \equiv 0$ in which case $\HH(\eta) = H^2$ (as discussed above). However, our primary interest lies in identifying smaller target spaces $ \HH(\eta) $ tailored to particular $ b $ and $ \phi $.

The inspiration for our main result comes from a paper of Mashreghi and Shabankhah \cite{MR3176147} who studied the above  composition operator problem when $b$ and $\phi$ are inner functions, where their proofs depend heavily on this assumption. A recent paper of Dellepiane and Seco \cite{DELLEPIANE2027130974} studied this problem when $\phi(z)=z^n$ and $b \in H^\infty_1$ satisfy $\log(1-|b|)\in L^1(m)$ ($m$ is normalized Lebesgue measure on the unit circle $\T$). In our paper, not only do we cover general $b$ and $\phi$, but our approach is new compared to previous papers and makes use of reproducing kernel Hilbert space techniques, an idea of Sarason that defines composition operators on the space of complex measures on the unit circle $\T$, and the theory of Aleksandrov--Clark measures. 

To contextualize our results, we recall the following theorem, which covers composition operators between two model spaces. This situation arises when $b$ and $\phi$ are inner functions.  

\begin{theorem}[Mashreghi--Shabankhah]\label{Javad}
	For inner functions $\phi$ and $b$, define the inner function  $ \eta$ as follows:
	\begin{enumerate}
		\item If $ b(0)=0 $, then $$\eta(z)=	z\frac{(b\circ \phi)(z)}{\phi(z)}.$$
		
		\item If  $ b(0)\not=0$ and $ \phi(0)=0 $, then $ \eta(z)=	(b\circ\phi) (z) $.
		
		\item  If  $ b(0)\not=0$ and $ \phi(0)\not=0 $, then $ \eta(z)=	z(b\circ\phi) (z) $.
		
	\end{enumerate}
	Then   the composition operator $C_\phi$ maps $\mathcal{K}_b$ boundedly to $\mathcal{K}_{\eta}$.
	Moreover, if \begin{equation}\label{bkshift}
(S^{*} f)(z) = \frac{f(z) - f(0)}{z}
\end{equation}
denotes the backward shift on $H^2$, 
then    $\mathcal{K}_{\eta}$ is the smallest $S^*$-invariant subspace of $H^2$
	that contains $C_{\phi}(\mathcal{K}_{b})$. 
\end{theorem}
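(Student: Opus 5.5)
Work entirely with reproducing kernels. Since $C_\phi$ is bounded $H^2\to H^2$ and $\KK_b$, $\KK_\eta$ are closed subspaces, the only issue is showing $C_\phi(\KK_b)\subset\KK_\eta$; boundedness is then automatic. As $\KK_b$ is spanned by the kernels $k^b_\lambda$ and $C_\phi$ is bounded, it suffices to show $k^b_\lambda\circ\phi\in\KK_\eta$ for every $\lambda\in\D$. Equivalently, $\eta$ should divide $\overline{\ } $ in the sense $\bar\eta\,(k^b_\lambda\circ\phi)\in\overline{zH^2}$ on $\T$, since $\KK_\eta=H^2\cap\eta\overline{zH^2}$. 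On $\T$ we have
$$
k^b_\lambda(\phi)=\frac{1-\overline{b(\lambda)}\,b\circ\phi}{1-\bar\lambda\phi}
=\frac{b\circ\phi\,\bigl(\overline{b\circ\phi}-\overline{b(\lambda)}\bigr)}{\phi\,(\bar\phi-\bar\lambda)} .
$$
Here $\overline{b\circ\phi}-\overline{b(\lambda)}$ and $1/(\bar\phi-\bar\lambda)=\overline{1/(\phi-\lambda)}$ are conjugates of functions in the Smirnov class. We do not need $H^2$ membership of each factor separately, because the product is already in $H^2$. Hence
$$
\bar\eta\,k^b_\lambda(\phi)=\bar\eta\,\frac{b\circ\phi}{\phi}\;\overline{\Bigl(\frac{b\circ\phi-b(\lambda)}{\phi-\lambda}\Bigr)} .
$$
The function $g_\lambda=(b\circ\phi-b(\lambda))/(\phi-\lambda)=(Q_\lambda b)\circ\phi$, where $Q_\lambda b=(b-b(\lambda))/(z-\lambda)$, lies in $H^\infty$. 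Also $\bar\eta\,(b\circ\phi)/\phi$ is a conjugate analytic quantity in each case. The three cases then go as follows.

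In case (a), $b(0)=0$, so $b\circ\phi/\phi$ is inner. With $\eta=z\,b\circ\phi/\phi$ we get $\bar\eta\,k^b_\lambda(\phi)=\bar z\,\overline{g_\lambda}\in\overline{zH^2}$, as required.

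In case (b), $\phi(0)=0$ means $\phi=z\psi$. With $\eta=b\circ\phi$ we get $\bar\eta\,k^b_\lambda(\phi)=\bar z\,\overline{\psi g_\lambda}\in\overline{zH^2}$.

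In case (c), with $\eta=z\,b\circ\phi$ we get $\bar\eta\,k^b_\lambda(\phi)=\bar z\,\overline{\phi g_\lambda}\in\overline{zH^2}$.

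We also need $\eta$ inner, which is standard since the composition of inner functions is inner. So $C_\phi\KK_b\subset\KK_\eta$.

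For minimality, let $M$ be the smallest closed $S^*$-invariant subspace containing $C_\phi(\KK_b)$. By Beurling, $M=\KK_u$ for some inner $u$ (or $M=H^2$, which is excluded by the inclusion just proved), and $\KK_u\subset\KK_\eta$ gives $u\mid\eta$. It remains to show $\eta\mid u$, i.e., $\bar u\,k^b_\lambda(\phi)\in\overline{zH^2}$ for all $\lambda$ forces $\eta/u$ to be constant.

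I expect this to be the main obstacle. I would try the following. From the identity above, $\bar u\,k^b_\lambda(\phi)=\overline{(u/\eta)}\cdot\bar z\,\overline{h\,g_\lambda}$, where $h$ is $1$, $\psi$ or $\phi$ according to the case. So $(\eta/u)\cdot z h g_\lambda$, after conjugating, must lie in $zH^2$ for all $\lambda$. That is, $\overline{v}\,h g_\lambda\in H^2$ with $v=\eta/u$ inner, for all $\lambda$. Taking linear combinations, and the limit $\lambda\to$ values where $Q_\lambda b$ does not vanish, the functions $g_\lambda$ have no common inner factor. Indeed $Q_\lambda b(\phi(z))$ at $\lambda=\phi(z_0)$ equals $b'(\phi(z_0))\neq0$ generically, and the differences of $b(\lambda)$ cover a neighborhood. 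Then $v$ must divide $h$ in a way incompatible with the extra factor $z$, or the $b(0)$ conditions. For example, in case (a), using $\lambda=0$ gives $g_0=b\circ\phi/\phi$ up to the $z$ factor, and pairing with $k^b_\lambda(\phi)$ evaluated at $0$ shows $v(0)\neq0$ forces $v$ constant.

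The delicate points are handling the factor $z$, which requires exhibiting an element of $C_\phi\KK_b$ not vanishing at $0$ in the appropriate sense, and the gcd argument. I would settle both by evaluating the Taylor coefficients of $k^b_\lambda\circ\phi$ and its backward shifts at $0$.
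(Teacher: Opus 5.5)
\textbf{The inclusion.} Your proof of $C_\phi(\KK_b)\subset\KK_\eta$ is correct, and it is not the paper's route. The paper does not prove this theorem; it quotes it from Mashreghi--Shabankhah. Its first half is the inner special case of Theorem~\ref{MainT}, since $\HH(\eta)=\KK_\eta$ for inner $\eta$. There the case $\phi(0)=0$ comes from kernel positivity (Theorem~\ref{th:main}), and the other two cases come from the Aleksandrov--Clark/Sarason machinery behind Lemma~\ref{le:my pet result}. Your boundary factorization $k^b_\lambda\circ\phi=\frac{b\circ\phi}{\phi}\,\overline{(Q_\lambda b)\circ\phi}$ on $\T$ is much more elementary. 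However, it rests on $|\phi|=|b\circ\phi|=1$ a.e., so it does not extend beyond inner functions, which is exactly what the paper's methods are built for. The paper gives no proof of the minimality assertion.

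\textbf{The minimality part has a genuine gap.} Two of its steps fail as written.

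First, showing that for each $z_0$ some $g_\lambda(z_0)\neq 0$ only rules out common zeros. It says nothing about a common singular inner factor, so it does not show that the $g_\lambda$ have no common inner divisor. What works is this. The set $\{F\in H^2:\bar v hF\in H^2\}$ is a closed subspace containing every $g_\lambda=C_\phi(Q_\lambda b)$. Since $Q_\lambda b=\bar z b\,\overline{k^b_\lambda}$ is the image of $k^b_\lambda$ under the conjugation $f\mapsto\bar z b\bar f$ of $\KK_b$, the $Q_\lambda b$ span a dense subspace of $\KK_b$. Hence that closed subspace contains $C_\phi k^b_0=1-\overline{b(0)}\,b\circ\phi$, which is invertible in $H^\infty$. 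Therefore $\bar v h\in H^2$, that is, $v\mid h$.

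Second, you never show that an inner $v$ dividing both $h$ and $\eta$ must be constant. Your sentence ``$v(0)\neq0$ forces $v$ constant'' is false, and ``incompatible with the extra factor $z$'' is not an argument. The missing step goes as follows.
\begin{itemize}
\item In case (a), $h=1$ and you are done.
\item In cases (b) and (c), you have $v\mid\phi$. In (b) this holds because $h=\psi$ and $\phi=z\psi$; in (c) because $h=\phi$.
\item You also have $v\mid b\circ\phi$. In (b) this holds because $v\mid\eta=b\circ\phi$. In (c), $v\mid\phi$ and $\phi(0)\neq0$ give $v(0)\neq0$, so $v$ cannot absorb the factor $z$ of $\eta=z\,(b\circ\phi)$.
\item The identity $b(0)=b\circ\phi-\phi\cdot\bigl((Q_0b)\circ\phi\bigr)$ then gives $\bar v\,b(0)\in H^\infty$ on $\T$. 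Since $b(0)\neq 0$, $\bar v\in H^\infty$, so $v$ is constant.
\end{itemize}
This is precisely where the hypotheses $b(0)\neq0$ and $\phi(0)\neq0$ are used. Your proposed plan of ``evaluating Taylor coefficients at $0$'' does not supply it.
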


Our main result is the following analog of Theorem \ref{Javad} for composition operators between two de Branges--Rovnyak spaces. 

\begin{theorem}\label{MainT}
For $b$ and $\phi$ in $H^{\infty}_{1}$ we have the following.
\begin{enumerate}
\item The operator 
$C_{\phi}$ maps $\HH(b)$ boundedly to $ \HH(z (b \circ \phi))$.
\item If $b(0) = 0$, then $C_{\phi}$ maps $\HH(b)$ boundedly to $\HH(z \frac{b \circ \phi}{\phi})$.
\item If $\HH(\phi)$ contains the constants, then $C_{\phi}$ maps $\HH(b)$ boundedly to $\HH(b \circ \phi)$. 
\end{enumerate}
Moreover,  if $\phi(0) = 0$, then, in the three cases above, $C_{\phi}$ acts contractively on $\HH(b)$.
\end{theorem}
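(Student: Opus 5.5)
The plan is to work entirely at the level of reproducing kernels. I use the standard criterion: for reproducing kernel Hilbert spaces $\HH(K)$ and $\HH(L)$ on $\D$, the map $C_\phi$ sends $\HH(K)$ boundedly into $\HH(L)$ with $\|C_\phi\|\leq c$ if and only if
$$c^2L(z,\lambda)-K(\phi(z),\phi(\lambda))\succeq 0 .$$
Throughout I write $B=b\circ\phi$, $a=\phi(0)$, $k^\phi$ for the kernel of $\HH(\phi)$, and
$$G(z,\lambda)=k^b_{\phi(\lambda)}(\phi(z))=\frac{1-\overline{B(\lambda)}B(z)}{1-\overline{\phi(\lambda)}\phi(z)}.$$
Here $G\succeq0$, being a pull-back of a positive kernel. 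The key algebraic facts are the Schur-product identity
$$k^{B}=G\,k^\phi$$
and the identity $\bar\lambda z\,k^\phi=k^\phi-1+\overline{\phi(\lambda)}\phi(z)$. These give the kernels of the three target spaces as:
\begin{itemize}
\item $\HH(B)$ in (c): $k^{B}=Gk^\phi$;
\item $\HH(zB)$ in (a): $k^{zB}=1+\bar\lambda z\,Gk^\phi=\overline{B(\lambda)}B(z)+Gk^\phi$;
\item $\HH(zb_0\circ\phi)$ in (b), where $b=zb_0$, $b_0\in H^\infty_1$ (possible since $b(0)=0$), and $z(b\circ\phi)/\phi=z(b_0\circ\phi)$: its kernel is $1+\bar\lambda z\,G_0k^\phi$, with $G_0$ defined like $G$ but using $b_0$.
\end{itemize}
In case (b) the source kernel is $G=1+\overline{\phi(\lambda)}\phi(z)G_0$.

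For (c), the hypothesis $1\in\HH(\phi)$ means exactly that $c^2k^\phi-1\succeq0$ for some $c$. Taking the Schur product with $G\succeq0$ gives $c^2Gk^\phi-G\succeq0$, which is the required inequality. For (a) it suffices to show $c^2Gk^\phi-G\succeq0$, since $\overline{B(\lambda)}B(z)\succeq0$. For (b) I need
$$c^2\big(1+\bar\lambda z\,G_0k^\phi\big)-1-\overline{\phi(\lambda)}\phi(z)G_0\succeq0 .$$
With $c=1$ the left side equals $G_0\big(\bar\lambda z k^\phi-\overline{\phi(\lambda)}\phi(z)\big)=G_0(k^\phi-1)$.

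So everything hinges on comparing $k^\phi$ with the constant kernel $1$. The general inequality available is
$$k^\phi-\frac{k^\phi_0(z)\overline{k^\phi_0(\lambda)}}{k^\phi(0,0)}\succeq0 ,$$
which says the projection onto the span of $k^\phi_0$ is dominated by the identity. Here $k^\phi_0(z)=1-\bar a\phi(z)$ and $k^\phi(0,0)=1-|a|^2$. When $a=0$ this reads $k^\phi-1\succeq0$. Then (a), (b), (c) all hold with $c=1$: in (a) and (c) via $G(k^\phi-1)\succeq0$, and in (b) via $G_0(k^\phi-1)\succeq0$. This gives the contractivity assertion.

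The main obstacle is the case $a\neq0$ in (a) and (b), where $1\notin\HH(\phi)$ in general and the argument above fails. My plan is to factor $\phi=\tau\circ\psi$ with $\psi=\tau\circ\phi$, where $\tau(w)=(a-w)/(1-\bar aw)$ is the involutive automorphism exchanging $0$ and $a$, so that $\psi(0)=0$. Then $C_\phi=C_\psi C_\tau$. Applying the $a=0$ case to $\psi$ with the function $b\circ\tau$ in place of $b$ reduces matters to showing that $C_\tau$ maps $\HH(b)$ boundedly into $\HH(b\circ\tau)$, or into the appropriate modified space in (b). At the kernel level this uses the identity
$$1-\overline{\tau(\mu)}\tau(w)=\frac{(1-|a|^2)(1-\bar\mu w)}{(1-a\bar\mu)(1-\bar a w)} ,$$
so that $k^b(\tau(w),\tau(\mu))$ is $k^{b\circ\tau}(w,\mu)$ multiplied by a rank-one factor $h(w)\overline{h(\mu)}$, with $h(w)=(1-\bar a w)/\sqrt{1-|a|^2}$. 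Since $h$ is a bounded multiplier, with bounded inverse, of $\HH(b\circ\tau)$, that space and the corresponding $z$-modified space are preserved up to equivalent norms. This yields boundedness, though not contractivity, in general. Checking that multiplication by $h$ and $1/h$ preserves these $\HH(\cdot)$ spaces is the step I expect to require the most care. If it becomes delicate, I would instead verify directly that $G\,h\bar h/(1-|a|^2)=(1-\overline{B(\lambda)}B(z))/(1-\overline{\psi(\lambda)}\psi(z))$ and compare this with $G$ using the boundedness of $h^{\pm1}$.
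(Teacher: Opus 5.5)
Your kernel identities are correct, and your treatment of (c) and of the whole case $\phi(0)=0$ is essentially the paper's argument (Theorem~\ref{th:main} and the computation $G_0(k^\phi-1)\gg0$ in Theorem~\ref{th:b(0)0, phi(0)0}). \textbf{The gap is the case $a=\phi(0)\neq0$.}

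Your identity shows that $C_\tau$ maps $\HH(b)$ isometrically onto $h\,\HH(b\circ\tau)$, normed by $\|hf\|=\|f\|_{\HH(b\circ\tau)}$. But $h$ is \emph{not} a multiplier of $\HH(b\circ\tau)$ in general. Since $a\neq0$, the inclusion $h\HH(\beta)\subset\HH(\beta)$ would give $z\HH(\beta)\subset\HH(\beta)$. Shift invariance forces $\beta$ to be nonextreme: apply $S$ to $S^*\beta\in\HH(\beta)$ and use $k^\beta_0=1-\overline{\beta(0)}\beta\in\HH(\beta)$ to get $\beta\in\HH(\beta)$. So for extreme $b$, e.g.\ inner $b$, the inclusion $C_\tau\HH(b)\subset\HH(b\circ\tau)$ on which your reduction rests is false.

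The paper's example after Theorem~\ref{th:the general result} is exactly a counterexample. For $b=\tau$ one has $C_\tau\HH(b)=\C(1-\bar a z)$, while $\HH(b\circ\tau)=\HH(z)=\C$. More generally, your sufficient condition for (a), $c^2k^{b\circ\phi}-G\gg0$, is also the target of your fallback comparison. It cannot hold for all $b,\phi$, because it says $C_\phi\HH(b)\subset\HH(b\circ\phi)$, and that is precisely what fails without the hypothesis of (c). The factor $z$ in (a) is essential, and your reduction discards it. The same problem affects (b). There, in addition, the $a=0$ case cannot be applied to $b\circ\tau$ as you state, since $(b\circ\tau)(0)=b(a)$ need not vanish.

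\textbf{The fix.} The missing observation is that the factor $z$ absorbs $h$. Because $\HH(z\beta)=\C\oplus z\HH(\beta)$ contains both $\HH(\beta)$ and $z\HH(\beta)$, multiplication by any polynomial of degree at most one maps $\HH(\beta)$ into $\HH(z\beta)$. Write $b=zb_0$. Then $C_\tau(z\HH(b_0))=\tau h\,\HH(b_0\circ\tau)=(a-z)(1-|a|^2)^{-1/2}\,\HH(b_0\circ\tau)\subset\HH(z(b_0\circ\tau))$, and hence $C_\tau\HH(b)\subset\HH(z(b_0\circ\tau))$. Your $a=0$ computation for (b), applied to $\psi$ and the function $z(b_0\circ\tau)$, then gives $C_\phi\HH(b)\subset\HH(z(b_0\circ\phi))$, which is (b). Finally, (a) follows by applying (b) to $zb$, since $\HH(b)\subset\HH(zb)$.

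With this correction your approach would be a purely reproducing-kernel proof of (a) and (b). That would replace the paper's route through Aleksandrov--Clark measures, Sarason's operator $A_{\phi,\psi}$ and Corollary~\ref{co:Aleksandrov Cauchy} (Lemma~\ref{le:my pet result}). As written, though, the central step of your proposal is false.
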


More tangible conditions for $\HH(\phi)$ to contain the constants are given in Lemma \ref{le:1 in H9phi} below. 

When $\phi$ is inner and $\phi(0)=0$, we can say quite a bit more about the range of $C_{\phi}$ on $\HH(b)$.  

\begin{theorem}\label{onepointsix}
If $b \in H^{\infty}_{1}$, $\phi$ is inner with  $\phi(0) = 0$, and $\mathcal{Y}_{\phi} := C_{\phi}(H^2)$, then we have the following.
\begin{enumerate}
    \item $C_{\phi}$  is an isometry from $\mathcal{H}(b)$ to $\mathcal{H}(b \circ \phi)$ with  $$C_\phi(\HH(b))=\HH(b\circ\phi)\cap\YY_\phi.$$ 
    \item $\HH(b\circ\phi)$ has the orthogonal decomposition 	
    \begin{equation}\label{lsdfjjjJJ}
        \HH(b\circ\phi)= C_\phi(\HH(b))\oplus_{	\HH(b\circ\phi)} 	\left(\HH(b\circ\phi)\cap\YY_\phi^\perp\right),
    \end{equation}
	where $\mathcal{Y}_{\phi}^{\perp} = H^2 \ominus \mathcal{Y}_{\phi}$. 
\end{enumerate}
\end{theorem}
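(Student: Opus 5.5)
Since $\phi$ is inner with $\phi(0)=0$, $C_\phi$ is an isometry of $H^2$ (the measure $m\circ\phi^{-1}$ on $\T$ is $m$, because $\phi$ is inner with $\phi(0)=0$). Hence $\YY_\phi=C_\phi(H^2)$ is a closed subspace of $H^2$, and $P:=C_\phi C_\phi^*$ is the orthogonal projection onto it. The plan is to compare reproducing kernels. Write $k^b_\lambda$ as in \eqref{KernelHb}; then $\HH(b)$ is the range space $(I-T_bT_{\bar b})^{1/2}H^2$, with kernel $(1-\overline{b(\lambda)}b(z))k_\lambda(z)$.

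\textbf{Step 1: the kernel identity.} We aim to show
\[
C_\phi (I-T_bT_{\bar b})C_\phi^* \;=\; P\,(I-T_{b\circ\phi}T_{\overline{b\circ\phi}}).
\]
First, $C_\phi T_b=T_{b\circ\phi}C_\phi$. Next, for $\phi$ inner with $\phi(0)=0$, $C_\phi$ intertwines $T_{\bar b}$: we have $C_\phi^* T_{\overline{b\circ\phi}}=T_{\bar b}C_\phi^*$, equivalently $T_{\overline{b\circ\phi}}$ commutes with $P$. Indeed, $L^2=\bigoplus$ of $\phi$-power pieces, and multiplication by a function of $\phi$ preserves the "$C_\phi(L^2)$" part; $P$ is the conditional expectation onto $\sigma(\phi)$ restricted appropriately, and multiplication by a $\sigma(\phi)$-measurable function commutes with that expectation. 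I would verify this directly using $C_\phi^*k_\lambda=k_{\phi(\lambda)}$-type formulas, or via the conditional-expectation description of $C_\phi^*$ on $L^2$ for measure-preserving $\phi$. Granting it,
\[
C_\phi T_bT_{\bar b}C_\phi^*=T_{b\circ\phi}\,C_\phi C_\phi^*\,T_{\overline{b\circ\phi}}=T_{b\circ\phi}T_{\overline{b\circ\phi}}P=P\,T_{b\circ\phi}T_{\overline{b\circ\phi}},
\]
and since $P$ commutes with $T_{b\circ\phi}T_{\overline{b\circ\phi}}$, the identity follows.

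\textbf{Step 2: identifying ranges.} By the range-space (Douglas) lemma, the range of $C_\phi(I-T_bT_{\bar b})^{1/2}$, normed by the minimal-norm preimage, equals the range of $[C_\phi(I-T_bT_{\bar b})C_\phi^*]^{1/2}$. Because $C_\phi$ is injective, this range is exactly $C_\phi(\HH(b))$ with the norm $\|C_\phi f\|=\|f\|_{\HH(b)}$. Set $D:=I-T_{b\circ\phi}T_{\overline{b\circ\phi}}$. Since $D$ commutes with $P$, we get $D=PD+(I-P)D$ with both pieces positive, and $(PD)^{1/2}=PD^{1/2}=D^{1/2}P$. Thus the range of $(PD)^{1/2}$ is $P\,\HH(b\circ\phi)=\HH(b\circ\phi)\cap\YY_\phi$, and its range norm agrees with the $\HH(b\circ\phi)$ norm: because $D^{1/2}$ commutes with $P$, minimal preimages may be taken in $\YY_\phi$. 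This proves (a), i.e. the isometry and the description of $C_\phi(\HH(b))$. The same argument with $I-P$ shows $\HH(b\circ\phi)=D^{1/2}\YY_\phi\oplus D^{1/2}\YY_\phi^\perp$ orthogonally in the range norm, because $D^{1/2}$ is isometric from $(\ker D)^\perp$ and respects the reducing decomposition. This yields \eqref{lsdfjjjJJ}.

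\textbf{Main obstacle.} The key step is the commutation $PT_{\overline{b\circ\phi}}=T_{\overline{b\circ\phi}}P$ on $H^2$. I would prove it by checking on the orthonormal basis $\phi^n z^j$-type decomposition $H^2=\bigoplus_{n\ge0}\phi^n\KK_\phi$ twisted by $C_\phi$. Concretely, I would show $\YY_\phi$ and $\YY_\phi^\perp$ are both invariant under $T_{\bar h}$ for $h=g\circ\phi$. The first follows from $T_{\bar h}(f\circ\phi)=(T_{\bar g}f)\circ\phi$, which is checked on monomials $\phi^n$ using the inner property and the measure-preserving change of variables. The second follows from self-adjointness of the pair $T_h,T_{\bar h}$ together with $T_h\YY_\phi\subset\YY_\phi$.
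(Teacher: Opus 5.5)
Your proof is correct, but it takes a genuinely different route from the paper's.

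\textbf{The paper's route.} It stays with the norm formula $\|F\|^2_{\HH(b\circ\phi)}=\sup_{H\in H^2}\bigl(\|F+(b\circ\phi)H\|^2-\|H\|^2\bigr)$. It splits both $F=f\circ\phi+u$ and $H=h\circ\phi+v$ along $\YY_\phi\oplus\YY_\phi^\perp$. The invariance of $\YY_\phi^\perp$ under multiplication by $b\circ\phi$ kills the cross terms, so the supremum separates into $\|f\|^2_{\HH(b)}$ plus a second supremum, which is then identified with $\|u\|^2_{\HH(b\circ\phi)}$. The inclusion $C_\phi(\HH(b))\subset\HH(b\circ\phi)$ itself is taken from Corollary~\ref{co:who contains the constants}.

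\textbf{Your route.} You compress everything into the single identity $C_\phi(I-T_bT_{\bar b})C_\phi^*=P(I-T_{b\circ\phi}T_{\overline{b\circ\phi}})$. You then read off (a) and (b) from the range-space (Douglas) lemma together with $PD^{1/2}=D^{1/2}P$.

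\textbf{Comparison.} Both proofs rest on the same key input: $\YY_\phi$ reduces $T_{b\circ\phi}$, i.e.\ $T_{\overline{b\circ\phi}}C_\phi=C_\phi T_{\bar b}$ (the paper's Lemma~\ref{le:mistery identity}). Your proposed check on $\phi^n$, using $|\phi|=1$ a.e.\ and $\phi(0)=0$ and then extending in $f$ and $g$ by continuity, does prove it. The paper's argument is more elementary, since it avoids square roots and the Douglas lemma. Yours is more structural. It yields the inclusion, the isometry and the description of the range in one stroke, without appeal to Corollary~\ref{co:who contains the constants}. It also exhibits both summands in \eqref{lsdfjjjJJ} as range spaces, of $(PD)^{1/2}$ and $((I-P)D)^{1/2}$ respectively.

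\textbf{A correction to Step 1.} The identity $C_\phi^*T_{\overline{b\circ\phi}}=T_{\bar b}C_\phi^*$ is \emph{not} equivalent to $P$ commuting with $T_{\overline{b\circ\phi}}$. It is just the adjoint of the trivial relation $T_{b\circ\phi}C_\phi=C_\phi T_b$, and it holds for every analytic self-map $\phi$. Your computation uses it legitimately to get $C_\phi T_bT_{\bar b}C_\phi^*=T_{b\circ\phi}PT_{\overline{b\circ\phi}}$. The commutation $PT_{\overline{b\circ\phi}}=T_{\overline{b\circ\phi}}P$ is a separate, nontrivial fact that needs $\phi$ inner with $\phi(0)=0$. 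Your last paragraph correctly reduces it to $T_{\overline{g\circ\phi}}C_\phi=C_\phi T_{\bar g}$, so the conditional-expectation heuristic can simply be dropped.
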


	The previous theorem answers several open questions posed by Dellepiane and Seco \cite{DELLEPIANE2027130974}. Specifically, Theorem~\ref{onepointsix} (a) extends \cite[Theorem 3.9]{DELLEPIANE2027130974}, where they prove that  $C_\phi:\HH(b)\to \HH(b\circ\phi)$ is an isometry when $\phi(z)=z^n$ and $\log(1 - |b|) \in L^1(m)$. Problem 3.10 in \cite{DELLEPIANE2027130974} asks about the relationship between $\HH(b\circ\phi)$ and $\HH(b)$ and Theorem~\ref{onepointsix} (b) gives a partial answer when $ \phi $ is inner and vanishes at $0$. 

The alert reader probably noticed the absence of a discussion of minimality in Theorem \ref{MainT} as there was in Theorem \ref{Javad}. We will address this issue in the last section of our paper.  

This paper is organized as follows. Section \ref{RKfacts} reviews standard facts about reproducing kernels, which are then used in Section  \ref{se:simple results} to prove  Theorem \ref{th:main} (stated as Theorem \ref{MainT} (c) above). Section  \ref{Clark} discusses Sarason's extension of composition operators to measures on the unit circle via their Poisson integrals. This discussion sets  up the proofs of Theorem \ref{th:the general result} and Theorem \ref{th:b(0)0} in Section \ref{GeneralC}, which establish 
(a) and (b) of Theorem \ref{MainT}. In Section \ref{Structure},
we establish the orthogonal decomposition of $\HH(b  \circ \phi)$ given in \eqref{lsdfjjjJJ}. Finally, Section \ref{FRem} addresses the complexities of minimality in Theorem \ref{MainT} (as in Theorem \ref{Javad}) and presents further examples.   

\section{Reproducing kernel facts}\label{RKfacts}

A key element in proving Theorem \ref{MainT} is to analyze the reproducing kernel from \eqref{KernelHb}. This analysis relies on several general properties of reproducing kernel Hilbert spaces. For a detailed treatment of these concepts, see \cite{MR3526117}. For a set $X$, we say that a function $K: X \times X \to \C$ is positive semi-definite, written $K\gg 0$ if
$$\sum_{1 \leq j, k \leq n} K(x_j, x_k) c_j \overline{c_k} \geq 0$$
for each $n \geq 1$ and any $x_1, \ldots, x_n \in X$ and $c_1, \ldots, c_n \in \C$. 
We will usually abbreviate this condition as ``$K$ is positive''. 
To each positive $K$, called a kernel, there is an associated reproducing kernel Hilbert space $\mathcal{H}_K$ whose elements are functions $f: X \to \C$ satisfying 
$$f(x) = \langle f, K(\cdot, x)\rangle_{\mathcal{H}_{K}}, \quad x \in X.$$
The previous identity assumes that  
\begin{equation}\label{containskernels}
K(\cdot, x) \in \HH_{K} \; \; \mbox{for all $x \in X$}.
\end{equation}
The following are some useful facts about kernels.

\begin{lemma}\label{le:basic}
	\begin{enumerate}
		\item  Sums and products of positive kernels are positive.
		
		\item If $K$ is positive and $\phi:X\to X$, then $K(\phi(\cdot),\phi(\cdot))$ is positive.
		\item 
		For two positive kernels $ K_1$ and $K_2 $, 
		the following are equivalent: 
		\begin{enumerate}
		\item The composition operator $C_\phi$ is bounded from $\mathcal{H}_{K_2}$ to $\mathcal{H}_{K_1}$; 
		\item There exists a constant $M>0$ such that $$M^2K_1-K_2(\phi(\cdot), \phi(\cdot))\gg0.$$
		\end{enumerate}
		In this case, $\|C_\phi\|_{\HH_{K_2} \to \HH_{K_1}}$ is the smallest such $M$.
		\item For each $b\in H^\infty_1$, the kernel
		\[
		K(z, \lambda) = \frac{1- \overline{b(\lambda)}b(z)}{1-\overline{\lambda} z}, \quad z, \lambda \in \D,
		\]
		is positive. This is actually the reproducing kernel for the space $\HH(b)$ from \eqref{KernelHb}.
        \item If $f: X \to \C$, then $f \in \HH_{K}$ if and only if there is an $M > 0$ such that $M^2 K(x, y) - f(x) \overline{f(y)}\gg0$. Moreover,  $\|f\|_{\HH_{K}}$ is the smallest such $M$. 
        \item For any $f: X \to \C$, the kernel $K(x, y) = f(x) \overline{f(y)}$ is positive. 
	\end{enumerate}
\end{lemma}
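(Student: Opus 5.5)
The plan is to treat each item of Lemma~\ref{le:basic} as a standard fact about reproducing kernels, derived from the Moore--Aronszajn correspondence $K \leftrightarrow \HH_K$ together with the Gram-matrix description of positivity. I will take the items in the order (f), (a), (b), (e), (c), (d), since the later ones use the earlier ones.

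For (f), the quadratic form equals $\bigl|\sum_j c_j f(x_j)\bigr|^2 \geq 0$. For (a), sums are immediate. For products, the Gram matrices $[K_1(x_j,x_k)]$ and $[K_2(x_j,x_k)]$ are positive semidefinite, so their Schur (entrywise) product is positive semidefinite by the Schur product theorem. To prove that theorem, write one matrix as $\sum_\ell v_\ell v_\ell^*$; then the Schur product with the other becomes a sum of matrices congruent to it via diagonal matrices. For (b), the quadratic form of $K(\phi(\cdot),\phi(\cdot))$ at points $x_j$ is the quadratic form of $K$ at the points $\phi(x_j)$, with coincident points merged by adding their coefficients, so it is nonnegative.

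For (e), this is the classical Aronszajn criterion. If $f\in\HH_K$, then for any finite combination
\[
\Bigl|\sum_j \overline{c_j}\, f(x_j)\Bigr|^2=\Bigl|\Bigl\langle f,\sum_j c_jK(\cdot,x_j)\Bigr\rangle\Bigr|^2\leq \|f\|^2\sum_{j,k}c_j\overline{c_k}K(x_k,x_j),
\]
by Cauchy--Schwarz, which gives the positivity of $\|f\|^2K - f\otimes\overline f$ after relabeling. Conversely, suppose the positivity holds with constant $M$. Define a linear functional on the dense span of the kernel functions by $\sum c_jK(\cdot,x_j)\mapsto\sum \overline{c_j}f(x_j)$. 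The hypothesis shows it is well defined and bounded by $M$, so by Riesz it is represented by some $g\in\HH_K$. Evaluating at $K(\cdot,x)$ gives $g(x)=f(x)$, with $\|f\|\leq M$. This converse is the step needing the most care, because of well-definedness: the functional must vanish on combinations that are zero in $\HH_K$, and the inequality gives exactly this.

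For (c), I will use the adjoint identity $C_\phi^*K_1(\cdot,x)=K_2(\cdot,\phi(x))$, which holds whenever $C_\phi$ is bounded. It follows from $\langle C_\phi f, K_1(\cdot,x)\rangle = f(\phi(x)) = \langle f, K_2(\cdot,\phi(x))\rangle$. Hence $\|C_\phi\|\leq M$ iff $\|C_\phi^*h\|^2\leq M^2\|h\|^2$ on the dense span of the $K_1(\cdot,x_j)$, and expanding both norms in Gram form this is exactly positivity of $M^2K_1-K_2(\phi,\phi)$; this also identifies the smallest $M$. For the direction (ii)$\Rightarrow$(i), define $T$ on kernel combinations by $T\sum c_jK_1(\cdot,x_j)=\sum c_jK_2(\cdot,\phi(x_j))$. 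The positivity gives $\|Th\|\leq M\|h\|$, so $T$ is well defined and extends boundedly, and $T^*=C_\phi$ by the reproducing property. The same well-definedness care as in (e) is needed here.

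For (d), expand $(1-\overline\lambda z)^{-1}=\sum_n \overline\lambda^n z^n$, which is positive as a sum of kernels of type (f). The kernel $K$ is then this Szeg\H{o} kernel times $1-\overline{b(\lambda)}b(z)$, and the latter factor is not itself positive in general. Instead, use that multiplication by $b$ on $H^2$ has norm $\leq1$, so $T_b^*k_\lambda=\overline{b(\lambda)}k_\lambda$ and
\[
\sum c_j\overline{c_k}\,\overline{b(x_k)}b(x_j)\,k(x_j,x_k)=\Bigl\|T_b^*\sum \overline{c_k}\, k_{x_k}\Bigr\|^2\leq\Bigl\|\sum \overline{c_k}\, k_{x_k}\Bigr\|^2 .
\]
Alternatively, this follows from (c)/(e)-type reasoning applied to the contraction $T_b$. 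That the resulting RKHS is $\HH(b)$ is then the definition adopted in \eqref{KernelHb}.
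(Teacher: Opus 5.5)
Your arguments are correct and are the standard ones: the Schur product theorem for (a), Aronszajn's criterion via the Riesz representation theorem for (e), the adjoint identity $C_\phi^*K_1(\cdot,x)=K_2(\cdot,\phi(x))$ for (c), and $\|T_b^*h\|\leq\|h\|$ on spans of Szeg\H{o} kernels for (d); the paper itself gives no proof of this lemma and simply defers to the reproducing kernel literature \cite{MR3526117}, where these same arguments appear. The only cosmetic point is that the functional $\sum_j c_jK(\cdot,x_j)\mapsto\sum_j\overline{c_j}f(x_j)$ in (e) is conjugate-linear rather than linear, so Riesz should be applied to its complex conjugate.
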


Setting $\phi(z) = z$, condition (c) yields the following criterion for inclusion between two reproducing kernel Hilbert spaces. 

\begin{corollary}\label{co:inclusion between RKHS}
	For two kernels $ K_1, K_2 $, the following are equivalent. 
    \begin{enumerate}
        \item $\mathcal{H}_{K_2}\subset \mathcal{H}_{K_1}$;
        \item There exists a constant  $M>0$ such that $M^2K_1-K_2 \gg0$.
    \end{enumerate}
\end{corollary}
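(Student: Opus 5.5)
Corollary~\ref{co:inclusion between RKHS} follows from Lemma~\ref{le:basic}(c) by specializing to $\phi(z)=z$. With this choice, $C_\phi$ is the identity map $f\mapsto f$, and $K_2(\phi(\cdot),\phi(\cdot))=K_2$. So the plan is to show that ``$C_\phi$ bounded from $\HH_{K_2}$ to $\HH_{K_1}$'' is equivalent to the set inclusion $\HH_{K_2}\subset\HH_{K_1}$. Once that is done, the corollary is exactly the equivalence (i)$\Leftrightarrow$(ii) of Lemma~\ref{le:basic}(c).

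First, suppose condition (b) holds, i.e., $M^2K_1-K_2\gg0$. Lemma~\ref{le:basic}(c) then says the identity map is bounded from $\HH_{K_2}$ into $\HH_{K_1}$, which in particular gives $\HH_{K_2}\subset\HH_{K_1}$. To see this directly instead, take $f\in\HH_{K_2}$. By Lemma~\ref{le:basic}(e), $\|f\|^2_{\HH_{K_2}}K_2-f\otimes\bar f\gg0$. Adding $\|f\|^2_{\HH_{K_2}}(M^2K_1-K_2)\gg0$ and using Lemma~\ref{le:basic}(a) gives $M^2\|f\|_{\HH_{K_2}}^2K_1-f\otimes\bar f\gg0$. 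Lemma~\ref{le:basic}(e) then gives $f\in\HH_{K_1}$ with $\|f\|_{\HH_{K_1}}\le M\|f\|_{\HH_{K_2}}$.

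For the converse, assume only the set inclusion $\HH_{K_2}\subset\HH_{K_1}$. We need the identity map to be bounded, and this is where the main obstacle lies. I would invoke the closed graph theorem. Suppose $f_n\to f$ in $\HH_{K_2}$ and $f_n\to g$ in $\HH_{K_1}$. Norm convergence in a reproducing kernel Hilbert space implies pointwise convergence, because point evaluations are continuous: $|h(x)|\le\|h\|K(x,x)^{1/2}$. Hence $f(x)=\lim f_n(x)=g(x)$ for every $x\in X$, so $f=g$ and the graph is closed. The identity is therefore a bounded operator. Applying Lemma~\ref{le:basic}(c) with $\phi=\mathrm{id}$ then yields an $M$ with $M^2K_1-K_2\gg0$; indeed $M=\|\mathrm{id}\|_{\HH_{K_2}\to\HH_{K_1}}$ works.

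Combining the two directions proves the corollary. As a byproduct, the smallest admissible $M$ equals the norm of the inclusion map.
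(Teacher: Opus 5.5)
Your proposal is correct and follows essentially the paper's route: specialize Lemma~\ref{le:basic}(c) to $\phi=\mathrm{id}$, and use the closed graph theorem (via continuity of point evaluations) to see that a set inclusion of reproducing kernel Hilbert spaces is automatically bounded. Your direct derivation of (b)$\Rightarrow$(a) from Lemma~\ref{le:basic}(e) is a valid self-contained supplement, though the paper does not need it.
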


An application of the closed graph theorem implies  that any inclusion between two reproducing kernel Hilbert spaces is automatically bounded.
 
 \section{Direct reproducing kernel arguments}\label{se:simple results}
 
This section presents results obtained through reproducing kernel arguments, beginning with a direct consequence of Lemma \ref{le:basic} (e).

\begin{lemma}\label{le:1 in H9phi} For $ \phi\in H^\infty_1 $ the following are equivalent. 
\begin{enumerate}
    \item $ \HH(\phi) $ contains the constants;
    \item There is a constant $ M > 0 $ such that 
	\begin{equation}\label{eq:1 in Hphi}
	 \frac{M^2(1-\overline{\phi(\lambda)}\phi(z))}{1- \overline{\lambda} z}-1\gg0.
	\end{equation}
\end{enumerate}
	In this case, the minimal value of $ M $ is $ \| 1 \|_{\HH(\phi)} $.
\end{lemma}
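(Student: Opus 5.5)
This is a direct specialization of Lemma \ref{le:basic}(e) to the constant function $f \equiv 1$ and the kernel of $\HH(\phi)$. By Lemma \ref{le:basic}(d), the reproducing kernel of $\HH(\phi)$ is
\[
K(z,\lambda) = \frac{1-\overline{\phi(\lambda)}\phi(z)}{1-\overline{\lambda}z}, \qquad z,\lambda \in \D,
\]
where we take $X = \D$. The plan is therefore to apply Lemma \ref{le:basic}(e) with $f \equiv 1$, read off the equivalence, and then pass from the single constant $1$ to all constants.

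\textbf{(a) $\Rightarrow$ (b).} If $\HH(\phi)$ contains the constants, then in particular $1 \in \HH(\phi)$. Lemma \ref{le:basic}(e) then gives an $M>0$ with
\[
M^2K(z,\lambda) - 1\cdot\overline{1} \gg 0 .
\]
This is exactly \eqref{eq:1 in Hphi}.

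\textbf{(b) $\Rightarrow$ (a).} Conversely, \eqref{eq:1 in Hphi} is precisely the condition in Lemma \ref{le:basic}(e) for $f\equiv 1$, so $1 \in \HH(\phi)$. Since $\HH(\phi)$ is a vector space, every constant $c = c\cdot 1$ also lies in $\HH(\phi)$.

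\textbf{Minimal $M$.} The final assertion of Lemma \ref{le:basic}(e) states that $\|f\|_{\HH_K}$ is the smallest admissible $M$. With $f\equiv 1$ this gives $\|1\|_{\HH(\phi)}$ as the minimal value of $M$ in \eqref{eq:1 in Hphi}.

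None of these steps presents a real obstacle. The only point needing care is to note explicitly that membership of the single function $1$ is equivalent to containing all constants, which follows from linearity as above.
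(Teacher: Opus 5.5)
Your proposal is correct and matches the paper's approach: the paper records this lemma, without further argument, as a direct consequence of Lemma~\ref{le:basic}~(e) applied to $f\equiv 1$ and the reproducing kernel of $\HH(\phi)$. Your linearity remark, that $1\in\HH(\phi)$ already gives all constants, is the only detail you add, and it is fine.
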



The following lemma discusses the criterion as to when the constants belong to  $\HH(\phi)$. From   \cite[Thm.~6.7]{Hb1-book}, it is known that  $\log(1 - |\phi|) \in L^1(m)$ if and only if $\phi$ is a nonextreme point of $H^{\infty}_{1}$. In the above, recall that $m$ is normalized Lebesgue measure on $\T$. We will often use these terms ``extreme'' and ``nonextreme'' below. 

\begin{lemma}\label{Hbcopnstants}
For $\phi \in H^{\infty}_{1}$ the following are equivalent. 
\begin{enumerate}
\item $\HH(\phi)$ contains the constants; 
\item Either $\phi(0) = 0$ or $\phi$ is nonextreme.
\end{enumerate}
\end{lemma}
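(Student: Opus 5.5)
We prove the two implications separately, using only the kernel criterion of Lemma \ref{le:1 in H9phi} together with standard facts about $\HH(\phi)$ and the extreme/nonextreme dichotomy recalled above.

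\emph{(b) $\Rightarrow$ (a).} If $\phi(0)=0$, the kernel of $\HH(\phi)$ at $\lambda=0$ is $k^{\phi}_0(z)=1-\overline{\phi(0)}\phi(z)=1$. Since reproducing kernels belong to the space by \eqref{containskernels}, $1\in\HH(\phi)$, and $\|1\|^2_{\HH(\phi)}=k^\phi_0(0)=1$. If $\phi$ is nonextreme, let $a$ be its Pythagorean mate: the outer function with $|a|^2+|\phi|^2=1$ a.e.\ on $\T$ and $a(0)>0$, which exists because $\log(1-|\phi|^2)\in L^1(m)$. By the standard description of $\HH(\phi)$ in the nonextreme case \cite{Hb2-book}, $\HH(\phi)$ contains the range of the Toeplitz operator $T_{\overline{a}}$ (indeed $\HH(\phi)=\mathcal M(\overline a)\oplus\dots$ up to the usual description). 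Since $T_{\overline{a}}1=\overline{a(0)}\ne0$, the constants lie in $\HH(\phi)$. If one prefers to stay within kernel arguments, note that $1=T_{\overline a}(1/\overline{a(0)})$ and $\mathcal M(\overline a)\subset\HH(\phi)$ contractively. The identity behind this inclusion is that $(1-\overline{\phi(\lambda)}\phi(z))/(1-\overline\lambda z)-\text{kernel of }\mathcal M(\overline a)$ is the kernel of $\mathcal M(T_{\overline\phi})$-type, hence positive. This is the route I would cite rather than reprove.

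\emph{(a) $\Rightarrow$ (b).} Suppose $1\in\HH(\phi)$, $\phi(0)\ne0$, and, for contradiction, $\phi$ is extreme. The key tool is the $S^*$-invariance of $\HH(\phi)$ and the fact that $S^*\phi\in\HH(\phi)$, together with the identity valid for $f\in\HH(\phi)$:
\[
\|S^*f\|^2_{\HH(\phi)}\le \|f\|^2_{\HH(\phi)}-|f(0)|^2,
\]
with equality for all $f$ exactly when $\phi$ is extreme. This is Sarason's characterization, \cite[Ch.~V]{MR1289670}. Iterating the equality on $f=1$ is not directly useful, since $S^*1=0$. Instead I would use the other characterization: when $\phi$ is extreme, $\phi\notin\HH(\phi)$. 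Observe that $\phi(0)\ne0$ and $1\in\HH(\phi)$. Also $k_0^\phi=1-\overline{\phi(0)}\phi\in\HH(\phi)$ always. Therefore
\[
\phi=\frac{1-k^\phi_0}{\overline{\phi(0)}}\in\HH(\phi),
\]
contradicting extremality, since by \cite[Ch.~V]{MR1289670} $\phi\in\HH(\phi)$ if and only if $\phi$ is nonextreme. So $\phi$ is nonextreme.

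The main obstacle is just citing correctly the facts ``$\phi\in\HH(\phi)\iff\phi$ nonextreme'' and ``nonextreme $\Rightarrow$ polynomials $\subset\HH(\phi)$'' from Sarason's book. With those facts, both directions are short, the second reducing to the one-line identity $\phi=(1-k_0^\phi)/\overline{\phi(0)}$.
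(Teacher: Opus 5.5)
Your proof is correct. The direction (a)$\Rightarrow$(b) is exactly the paper's argument: since $k_0^\phi=1-\overline{\phi(0)}\phi$ always lies in $\HH(\phi)$, having $1\in\HH(\phi)$ and $\phi(0)\neq0$ forces $\phi\in\HH(\phi)$, so $\phi$ is nonextreme. Your detour through the inequality $\|S^*f\|^2_{\HH(\phi)}\le\|f\|^2_{\HH(\phi)}-|f(0)|^2$ plays no role and can be dropped.

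You differ from the paper only in the nonextreme case of (b)$\Rightarrow$(a). There you use the contractive inclusion $\mathcal{M}(\overline a)\subset\HH(\phi)$ together with $T_{\overline a}1=\overline{a(0)}\neq0$. The paper instead reads the same identity backwards, $1=k_0^\phi+\overline{\phi(0)}\phi$, using the other half of the equivalence ``$\phi\in\HH(\phi)$ iff $\phi$ is nonextreme'', which you already invoke. The paper's version is shorter and gets both directions from a single cited fact. Yours needs an extra citation but yields the explicit estimate $\|1\|_{\HH(\phi)}\le\|1\|_{\mathcal{M}(\overline a)}=1/|a(0)|$. This is consistent with, and in general weaker than, the exact value in Remark~\ref{mate}, since $|a(0)|^2+|\phi(0)|^2\le1$.

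Two of your side justifications are inaccurate, although nothing depends on them. First, $\HH(\phi)$ is not in general an orthogonal sum having $\mathcal{M}(\overline a)$ as a summand. Second, the difference of the two kernels corresponds to the operator $(I-T_\phi T_{\overline\phi})-T_{|a|^2}=T_{\overline\phi}T_\phi-T_\phi T_{\overline\phi}$. This is positive because $\|T_{\overline\phi}f\|\le\|\phi f\|$, but it is not the kernel of $\mathcal{M}(T_{\overline\phi})$, which would be $T_{\overline\phi}T_\phi$.
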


\begin{proof}
Observe that $k_0^{\phi}=1-\overline{\phi(0)}\phi$ and the reproducing kernels always belong to $\HH(\phi)$ (recall \eqref{containskernels}). Thus, $1\in \HH(\phi)$ if and only if either $\phi(0) = 0$ or $\overline{\phi(0)}\phi$, and hence $\phi$, belongs to $\HH(\phi)$.  However, $\phi\in \HH(\phi)$ if and only if $\phi$ is nonextreme (see \cite[Cor.~23.24]{Hb2-book} and \cite[Cor.~25.8]{Hb2-book}). 
\end{proof}

\begin{remark}\label{mate}
It is known  that $\|1\|_{\HH(\phi)} = 1$ when $\phi(0) = 0$ or 
$$\|1\|_{\HH(\phi)}^2 = 1 + \frac{|\phi(0)|^2}{|\psi(0)|^2},$$
where $\phi$ is nonextreme and 
$\psi \in H^{\infty}_{1}$ is the unique, up to a rotational constant, function that satisfies $|\phi|^2 + |\psi|^2 = 1$ almost everywhere on $\T$ \cite[Thm.~23.24]{Hb2-book}. The function $\psi$ is often called the Pythagorean mate of $\phi$. For particularly simple nonextreme $\phi$, one can compute its Pythagorean mate via the Fej\'{e}r--Riesz theorem \cite[Sec.~6]{MR3503356}. 
\end{remark}

We have the following result.

\begin{theorem}\label{th:main}
	
	Suppose $b, \phi\in H^\infty_1$ and $ \HH(\phi) $ contains the constants. Then 	
	$$C_\phi(\HH(b))\subset \HH(b\circ \phi)$$ and $\|C_{\phi}\|_{\HH(b) \to \HH(b \circ \phi)} \leq  \| 1 \|_{\HH(\phi)} $.
	
\end{theorem}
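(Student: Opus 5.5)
By Lemma \ref{le:basic}(c), with $K_2$ the kernel of $\HH(b)$ and $K_1$ the kernel of $\HH(b\circ\phi)$, the statement follows once we show that for $M=\|1\|_{\HH(\phi)}$
\[
M^2\,\frac{1-\overline{b(\phi(\lambda))}\,b(\phi(z))}{1-\overline{\lambda}z}-\frac{1-\overline{b(\phi(\lambda))}\,b(\phi(z))}{1-\overline{\phi(\lambda)}\phi(z)}\gg 0 .
\]
The idea is to factor out the common numerator. To this end, set
\[
A(z,\lambda)=1-\overline{b(\phi(\lambda))}\,b(\phi(z)), \qquad B(z,\lambda)=\frac{1}{1-\overline{\phi(\lambda)}\phi(z)} .
\]
The kernel we need to show is positive can then be written as
\[
\frac{A(z,\lambda)}{1-\overline{\phi(\lambda)}\phi(z)}\cdot\Bigl(M^2\,\frac{1-\overline{\phi(\lambda)}\phi(z)}{1-\overline{\lambda}z}-1\Bigr).
\]

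The first factor is $K_b(\phi(z),\phi(\lambda))$, where $K_b$ is the reproducing kernel of $\HH(b)$. It is positive by Lemma \ref{le:basic}(d) combined with (b). The second factor is positive exactly by Lemma \ref{le:1 in H9phi}, since $\HH(\phi)$ contains the constants, and we may take $M=\|1\|_{\HH(\phi)}$. By the Schur product property, Lemma \ref{le:basic}(a), the product is positive. Lemma \ref{le:basic}(c) then yields both the boundedness of $C_\phi\colon\HH(b)\to\HH(b\circ\phi)$ and the norm estimate $\|C_\phi\|\le\|1\|_{\HH(\phi)}$.

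The main step is noticing this factorization, namely
\[
\frac{A}{1-\overline{\lambda}z}=\frac{A}{1-\overline{\phi(\lambda)}\phi}\cdot\frac{1-\overline{\phi(\lambda)}\phi}{1-\overline{\lambda}z},
\]
which is a pointwise identity. Once it is written down, there is no real obstacle: every ingredient is already available from Lemma \ref{le:basic} and Lemma \ref{le:1 in H9phi}.
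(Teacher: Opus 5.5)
Your proposal is correct and follows the paper's proof essentially verbatim. You use the same factorization of the difference kernel into $K_b(\phi(z),\phi(\lambda))$ times the kernel of Lemma~\ref{le:1 in H9phi} with $M=\|1\|_{\HH(\phi)}$, then Lemma~\ref{le:basic}(a) and Lemma~\ref{le:basic}(c). The auxiliary function $B$ you introduce is never used and can simply be dropped.
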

\begin{proof}
	By Lemma~\ref{le:basic} (c), it suffices to prove the existence of a constant $M > 0$ such that 
	\[
M^2	\frac{1-b(\phi(z))\overline{b(\phi(\lambda ))}}{1-z \overline{\lambda}}-\frac{1-b(\phi(z))\overline{b(\phi(\lambda))}}{1-\phi(z)\overline{\phi(\lambda)}}\gg0.
	\]
	To see this, observe that 
	\[
	\begin{split}
		&M^2	\frac{1-b(\phi(z))\overline{b(\phi(\lambda))}}{1-z \overline{\lambda}}-\frac{1-b(\phi(z))\overline{b(\phi(\lambda))}}{1-\phi(z)\overline{\phi(\lambda)}}\\
		&=\frac{1-b(\phi(z))\overline{b(\phi(\lambda)}}{1-\phi(z)\overline{\phi(\lambda)}}\left( \frac{M^2(1-\phi(z)\overline{\phi(\lambda)})}{1-z \overline{\lambda}}-1\right).
	\end{split}
	\]
		The first factor above is positive by (b) and (d) of 
 Lemma~\ref{le:basic} (since it is the composition of the kernel for $ \HH(b) $ with an analytic self-map $ \phi $). Setting $M = \|1\|_{\HH(\phi)}$ in Lemma \ref{le:1 in H9phi}, the second factor above is positive.
\end{proof}

Combining Lemma \ref{Hbcopnstants} and Theorem \ref{th:main} yields the following corollary.

\begin{corollary}\label{co:who contains the constants}
Suppose $b, \phi\in H^\infty_1$.

\begin{enumerate}
	\item If $ \phi(0)=0 $, then 	$C_\phi(\HH(b))\subset \HH(b\circ \phi)$ and $\|C_{\phi}\|_{\HH(b) \to \HH(b \circ \phi)} = 1.$
	
	\item If $\phi$ is nonextreme, then 	$C_\phi(\HH(b))\subset \HH(b\circ \phi)$.
	
\end{enumerate}
\end{corollary}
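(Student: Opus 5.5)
The corollary follows from Theorem~\ref{th:main} combined with Lemma~\ref{Hbcopnstants} and Remark~\ref{mate}. The only extra point is the equality of norms in part (a).

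For (b), if $\phi$ is nonextreme, Lemma~\ref{Hbcopnstants} gives $1\in\HH(\phi)$. Theorem~\ref{th:main} then yields $C_\phi(\HH(b))\subset \HH(b\circ\phi)$, with norm at most $\|1\|_{\HH(\phi)}$. Nothing more is needed here.

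For (a), if $\phi(0)=0$, Lemma~\ref{Hbcopnstants} again gives $1\in\HH(\phi)$, so the inclusion holds by Theorem~\ref{th:main}. For the norm, note that $k^\phi_0 = 1-\overline{\phi(0)}\phi = 1$. Hence $\|1\|^2_{\HH(\phi)} = \langle k^\phi_0,k^\phi_0\rangle = k^\phi_0(0) = 1$, which is also recorded in Remark~\ref{mate}. Theorem~\ref{th:main} therefore gives $\|C_\phi\|_{\HH(b)\to\HH(b\circ\phi)}\leq 1$.

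For the reverse inequality I would test on a single kernel function. I take $k^b_0 = 1-\overline{b(0)}\,b$ in $\HH(b)$, which has norm squared $k^b_0(0)=1-|b(0)|^2$. Since $\phi(0)=0$, we get $C_\phi k^b_0 = 1-\overline{b(0)}\,(b\circ\phi) = 1-\overline{(b\circ\phi)(0)}\,(b\circ\phi)$. This is exactly the reproducing kernel of $\HH(b\circ\phi)$ at $0$, so its norm squared is $1-|b(\phi(0))|^2 = 1-|b(0)|^2$. When $|b(0)|<1$, which holds because $b$ is not a unimodular constant, this is a nonzero vector whose norm is preserved by $C_\phi$. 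Hence $\|C_\phi\|\geq 1$, and the norm equals $1$.

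The only step that requires any care is this lower bound. It relies on the standing assumption that $b$ is not a unimodular constant, so that $1-|b(0)|^2>0$ and the test vector $k^b_0$ is nonzero. If $\HH(b)=\{0\}$ were allowed, the norm would be $0$ rather than $1$.
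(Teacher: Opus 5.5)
Your proposal is correct and follows the paper's proof. The upper bound comes from Theorem~\ref{th:main} with $\|1\|_{\HH(\phi)}=1$ when $\phi(0)=0$, and the lower bound from $C_\phi k^b_0=k^{b\circ\phi}_0$, where both kernels have squared norm $1-|b(0)|^2$. Your explicit check that $1-|b(0)|^2>0$, because $b$ is not a unimodular constant, spells out a step the paper uses implicitly when it divides by this quantity.
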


\begin{proof}
It suffices to show that if $\phi(0) = 0$, then  $$\|C_{\phi}\|_{\HH(b) \to \HH(b \circ \phi)} = 1.$$ Lemma \ref{Hbcopnstants} and Theorem \ref{th:main} say that 
$$\|C_{\phi}\|_{\HH(b) \to \HH(b \circ \phi)} \leq 1.$$
Since $\phi(0) = 0$, we have 
$C_{\phi}(k_{0}^{b}) = k_{0}^{b \circ \phi}$
and so 
\begin{align*}
    1 - |b(0)|^2 & = 1 - |b(\phi(0))|^2\\
    & = \|k_{0}^{b \circ \phi}\|^2_{\HH(b \circ \phi)}\\
    & \leq \|C_{\phi}\|^{2}_{\HH(b) \to \HH(b \circ \phi)} \|k_{0}^{b}\|^2_{\HH(b)}\\
    & = \|C_{\phi}\|^{2}_{\HH(b) \to \HH(b \circ \phi)} (1 - |b(0)|^2).
\end{align*}
Thus, $$\|C_{\phi}\|_{\HH(b) \to \HH(b \circ \phi)} \geq 1$$ and equality follows. 
\end{proof}

Here is our next case.

\begin{theorem}\label{th:b(0)0, phi(0)0}
	If $b, \phi \in H^{\infty}_{1}$ with $b(0)=\phi(0)=0$, then 
	\begin{equation*}\label{eq:b(0)=0old}
			C_\phi(\HH(b))\subset \HH\Big(z \frac{b\circ\phi}{\phi}\Big)
	\end{equation*}
	and the inclusion is contractive.
\end{theorem}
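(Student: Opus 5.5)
The plan is to apply Lemma~\ref{le:basic}~(c) with $M=1$. Concretely, I will show that the difference between the reproducing kernel of $\HH(\eta)$, where $\eta := z\frac{b\circ\phi}{\phi}$, and the kernel of $\HH(b)$ composed with $\phi$ is positive. The proof has two ingredients: Schwarz's lemma, used to factor both $b$ and $\phi$, and the algebraic identity $\frac{1-xy}{1-x} = 1 + x\,\frac{1-y}{1-x}$.

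\emph{Step 1: factor via Schwarz's lemma.} Since $b(0)=0$ and $\phi(0)=0$, Schwarz's lemma gives $c(w):=b(w)/w \in H^\infty_1$ and $\psi(z):=\phi(z)/z\in H^\infty_1$. In particular $\frac{b\circ\phi}{\phi} = c\circ\phi$, so $\eta = z\,(c\circ\phi)$ lies in $H^\infty_1$ and $\HH(\eta)$ makes sense. Write $C := c\circ\phi$, so that $b\circ\phi = \phi\, C$ and $\eta = zC$.

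\emph{Step 2: rewrite both kernels.} Apply the identity $\frac{1-xy}{1-x} = 1 + x\,\frac{1-y}{1-x}$ with $y=\overline{C(\lambda)}C(z)$. Taking $x=\overline{\phi(\lambda)}\phi(z)$ gives
\[
\frac{1-\overline{b(\phi(\lambda))}b(\phi(z))}{1-\overline{\phi(\lambda)}\phi(z)} = 1 + \overline{\phi(\lambda)}\phi(z)\,\frac{1-\overline{C(\lambda)}C(z)}{1-\overline{\phi(\lambda)}\phi(z)},
\]
and taking $x = \overline{\lambda}z$ gives
\[
\frac{1-\overline{\eta(\lambda)}\eta(z)}{1-\overline{\lambda}z} = 1 + \overline{\lambda}z\,\frac{1-\overline{C(\lambda)}C(z)}{1-\overline{\lambda}z}.
\]
Subtracting the first from the second, the constants cancel. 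After factoring out $\frac{1-\overline{C(\lambda)}C(z)}{1-\overline{\phi(\lambda)}\phi(z)}$, the difference equals
\[
\frac{1-\overline{C(\lambda)}C(z)}{1-\overline{\phi(\lambda)}\phi(z)}\cdot \frac{\overline{\lambda}z - \overline{\phi(\lambda)}\phi(z)}{1-\overline{\lambda}z}
=\frac{1-\overline{C(\lambda)}C(z)}{1-\overline{\phi(\lambda)}\phi(z)}\cdot \overline{\lambda}z\,\frac{1-\overline{\psi(\lambda)}\psi(z)}{1-\overline{\lambda}z}.
\]
The second equality uses $\overline{\phi(\lambda)}\phi(z) = \overline{\lambda}z\,\overline{\psi(\lambda)}\psi(z)$.

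\emph{Step 3: positivity of each factor.} Each of the three factors is positive.
\begin{itemize}
\item The first factor is the $\HH(c)$ kernel composed with $\phi$, so it is positive by Lemma~\ref{le:basic}~(d) and~(b). If $c$ is a unimodular constant this factor is identically zero, which is harmless.
\item The factor $\overline{\lambda}z$ is positive by Lemma~\ref{le:basic}~(f).
\item The last factor is the $\HH(\psi)$ kernel, positive by Lemma~\ref{le:basic}~(d).
\end{itemize}
By Lemma~\ref{le:basic}~(a) the product is positive. Lemma~\ref{le:basic}~(c) with $M=1$ then yields $C_\phi(\HH(b))\subset\HH(\eta)$ with $\|C_\phi\|\le 1$.

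The only real obstacle is spotting the decomposition in Step~2: the difference of the two kernels must be factored so that the non-positive-looking expression $\overline{\lambda}z-\overline{\phi(\lambda)}\phi(z)$ appears. Once this is written as $\overline{\lambda}z\,(1-\overline{\psi(\lambda)}\psi(z))$ using $\psi=\phi/z$, the rest is routine. This is exactly where both hypotheses $b(0)=0$ and $\phi(0)=0$ enter.
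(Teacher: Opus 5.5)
Your proposal is correct and is essentially the paper's proof. The paper also proves positivity of $K_\eta - K_b(\phi(\cdot),\phi(\cdot))$, which is Lemma~\ref{le:basic}~(c) with $M=1$. It factors this difference as the $\HH(b_1)$ kernel composed with $\phi$ times $z\bar w\,\frac{1-\phi_1(z)\overline{\phi_1(w)}}{1-z\bar w}$, where $b=zb_1$ and $\phi=z\phi_1$; these are exactly your three positive factors. Your identity $\frac{1-xy}{1-x}=1+x\,\frac{1-y}{1-x}$ is just a tidier way of organizing the same algebraic computation.
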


\begin{proof}
	The condition $b(0) = 0$ implies that $b(z):=zb_1(z)$, with $b_1 \in H^{\infty}_{1}$, and so 
	$$\frac{b(\phi(z))}{\phi(z)}=b_1(\phi(z)).$$  By Lemma~\ref{le:basic}, it suffices to establish the positivity of the kernel
\[
K(z,w):=\frac{1-z \overline{w} b_1(\phi(z))\overline{b_1(\phi(w))}}{1-z \overline{w}}-\frac{1-\phi(z)\overline{\phi(w)} b_1(\phi(z))\overline{b_1(\phi(w))}}{1-\phi(z)\overline{\phi(w)}}.
\]	
A computation verifies  that $K(z, w)$ is equal to 
\begin{align*}
	&\frac{z\overline{w}-\phi(z)\overline{\phi(w)}-z\overline{w} b_1(\phi(z))\overline{b_1(\phi(w))}+\phi(z))\overline{\phi(w)}b_1(\phi(z))\overline{b_1(\phi(w))}}{(1-z\overline{w})(1-\phi(z)\overline{\phi(w)})}\\
	&= \frac{(z\overline{w}-\phi(z)\overline{\phi(w)})(1-b_1(\phi(z))\overline{b_1(\phi(w))})}{(1-z\overline{w})(1-\phi(z)\overline{\phi(w)})}\\
    & = \Big(\frac{1-b_1(\phi(z))\overline{b_1(\phi(w))}}{1-\phi(z)\overline{\phi(w)}}\Big) \Big(\frac{z\overline{w}-\phi(z)\overline{\phi(w)}}{1-z\overline{w}}\Big).
\end{align*}
The first factor
	is positive by Lemma~\ref{le:basic} (b). The  condition $\phi(0)=0$ implies  that $\phi(z)=z\phi_1(z)$, with $\phi_1 \in H^{\infty}_{1}$. The remaining factor above becomes
	\[
	\frac{z\overline{w}-\phi(z)\overline{\phi(w)}}{1-z\overline{w}}=z\bar w \frac{1-\phi_1(z)\overline{\phi_1(w)}}{1-z\overline{w}}
	\] 
	which is a positive kernel. This can be seen by applying Lemma \ref{le:basic} (d), the fact that $z \bar w$ defines a positive kernel (Lemma \ref{le:basic} (f)) and Lemma \ref{le:basic} (a). A final application of Lemma \ref{le:basic} (a) yields the positivity of $K(z, w)$.
\end{proof}

As shown below, Theorem~\ref{th:b(0)0, phi(0)0} is a special  case of Theorem~\ref{th:b(0)0}. We include this simpler proof because it relies only on the basic properties of reproducing kernels.

\section{The extension of $C_\phi$ to measures on $\T$}\label{Clark}

The preceding sections address certain choices of $b$ and $\phi$ by algebraic manipulation of the reproducing kernels. The remainder of this paper completes our investigation of the general case. To establish these unrestricted results, this section makes a brief detour into the theory of Aleksandrov–Clark measures and an idea of Sarason.

For a complex Borel measure $\mu$ on the unit circle $\T$, define its Poisson integral $P_{\mu}$ by 
\[
P_{\mu}(z)=\int_{\T}\frac{1-|z|^2}{|\zeta-z|^2}\,d\mu(\zeta), \quad z \in \D,
\] 
and observe  that $P_{\mu}$ is harmonic on $\D$. 
For an analytic self-map $\phi$ of $\D$, the composition operator $C_\phi f = f \circ \phi$ is naturally defined on spaces of analytic functions on $\D$. Following the seminal work of Sarason \cite{sarason1990composition}, the composition operator $C_{\phi }$ can be viewed as acting on the space $\mathcal{M}$ of complex Borel measures on $\T$. The definition is quite natural: for each positive  $\mu\in\MM$, Herglotz's theorem \cite[Ch.~9]{CMR} says that the positive harmonic function $P_{\mu} \circ \phi$ is the Poisson integral of another positive measure $A_\phi(\mu) \in  \MM$. Properties of $\mu$ often transfer to properties of $A_\phi(\mu)$. 
Some good references and discussions for the main result we state in this section are \cite{MR2876277}, \cite[Thm.~5.1]{MR2394657}, \cite{MR994787}. Since we need a particular form of the result, and for the sake of completeness, we give an independent exposition.

For $\psi\in H^\infty_1$,
$$w \mapsto \Re\left(\frac{1+\psi(w)}{1-\psi(w)}\right)$$ defines a positive harmonic function on $\D$ and by Herglotz's theorem, this is the Poisson integral of a positive $\sigma \in \mathcal{M}$. In other words, 
\begin{equation}\label{eq:AC1}
	\Re\left(\frac{1+\psi(w)}{1-\psi(w)}\right)=\int_{\T}\frac{1-|w|^2}{|\zeta-w|^2}\, d\sigma(\zeta), \quad w \in \D.
\end{equation} 
The measure $\sigma$ is the  Aleksandrov--Clark measure for $\psi$.

Let $\nu \in \mathcal{M}$ be the Aleksandrov-Clark measure for $\psi \circ \phi$ in that
\begin{equation}\label{eq:AC2}
	\Re\left(\frac{1+(\psi \circ \phi)(w)}{1-(\psi \circ \phi)(w)}\right)=\int_{\T}\frac{1-|w|^2}{|\zeta-w|^2}\, d\nu(\zeta), \quad w \in \D.
\end{equation}
This $\nu$ is the measure $A_\phi(\mu)$ discussed earlier. 
The following is important for our results below. 

\begin{theorem}\label{th:Aleksandrov} Fix $\phi, \psi \in H^{\infty}_{1}$. For each $f\in L^p(\sigma)$, $1\leq p\leq \infty$, there exists a unique function $A_{\phi, \psi} f\in L^p(\nu)$ such that 
	\begin{equation}\label{eq:AC3}
		P_{fd\sigma}(\phi(z))=\int_{\T} \frac{1-|z|^2}{|\zeta-z|^2}(A_{\phi, \psi} f)(\zeta)\,d\nu(\zeta), \quad z \in \D.
	\end{equation}
	Moreover, the map $A_{\phi, \psi}: L^p(\sigma)\to L^p(\nu)$ is linear and bounded. In particular, 
	\begin{equation}\label{eq:L2norm}
		\|A_{\phi, \psi}\|_{L^2(\sigma) \to L^2(\nu)} \leq 4\,\sqrt{\frac{1+|\phi(0)|}{1-|\phi(0)|}}.
	\end{equation}
\end{theorem}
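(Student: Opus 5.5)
The plan is to construct $A_{\phi,\psi}$ first on bounded densities using Herglotz's theorem and positivity, then prove the two endpoint estimates ($L^\infty$ and $L^1$), and finally interpolate. Uniqueness is immediate throughout: a complex measure on $\T$ is determined by its Poisson integral, so $(A_{\phi,\psi}f)\,d\nu$ is determined by \eqref{eq:AC3}.

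\emph{Step 1 (bounded positive densities).} Let $0\leq f\in L^\infty(\sigma)$. Then $0\leq f\,d\sigma\leq \|f\|_\infty\,\sigma$, so
\[
0\leq P_{f d\sigma}\circ\phi \leq \|f\|_\infty\, P_\sigma\circ\phi .
\]
By \eqref{eq:AC1} evaluated at $w=\phi(z)$ and by \eqref{eq:AC2}, the right-hand side equals $\|f\|_\infty P_\nu$. The left-hand side is positive harmonic, so Herglotz gives a positive measure $\rho$ with $P_\rho=P_{fd\sigma}\circ\phi$. Moreover $\|f\|_\infty\nu-\rho$ has a nonnegative Poisson integral, so it is a positive measure. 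Hence $0\leq\rho\leq\|f\|_\infty\nu$. It follows that $\rho=g\,d\nu$ with $0\leq g\leq\|f\|_\infty$, and we set $A_{\phi,\psi}f:=g$. Linearity on nonnegative functions follows from uniqueness. A general $f\in L^\infty(\sigma)$ is written as a combination of four nonnegative parts, which gives a linear map with $\|A_{\phi,\psi}f\|_{L^\infty(\nu)}\leq 4\|f\|_{L^\infty(\sigma)}$. A sharper constant is available via $|A f|\leq A|f|$, but it is not needed.

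\emph{Step 2 ($L^1$ bound).} For $f\geq 0$ bounded, evaluate \eqref{eq:AC3} at $z=0$ and apply Harnack's inequality:
\[
\int A_{\phi,\psi}f\,d\nu = P_{fd\sigma}(\phi(0))\leq \frac{1+|\phi(0)|}{1-|\phi(0)|}\int f\,d\sigma .
\]
Splitting a general $f$ into four nonnegative parts gives $\|A_{\phi,\psi}f\|_{L^1(\nu)}\leq 4\,\tfrac{1+|\phi(0)|}{1-|\phi(0)|}\,\|f\|_{L^1(\sigma)}$ on the dense subspace $L^\infty(\sigma)$. Extend $A_{\phi,\psi}$ by continuity to $L^1(\sigma)$ and then restrict to $L^p(\sigma)\subset L^1(\sigma)$. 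Identity \eqref{eq:AC3} persists under $L^1$ limits, because for fixed $z$ the Poisson kernel is bounded on $\T$, so both sides are continuous functionals of $f\in L^1(\sigma)$ and of $A_{\phi,\psi}f\in L^1(\nu)$.

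\emph{Step 3 (interpolation).} The Riesz--Thorin theorem applied between the $L^1$ and $L^\infty$ bounds gives boundedness $L^p(\sigma)\to L^p(\nu)$ for every $1<p<\infty$. For $p=2$ the constant is
\[
\sqrt{4\cdot 4\,\tfrac{1+|\phi(0)|}{1-|\phi(0)|}}=4\sqrt{\tfrac{1+|\phi(0)|}{1-|\phi(0)|}},
\]
which is exactly \eqref{eq:L2norm}.

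I expect the main obstacle to be the bookkeeping in these extensions. The operator is defined concretely only on $L^\infty$, and one must check that the continuous extension to $L^p$ still satisfies \eqref{eq:AC3}, and that the interpolated operator coincides with the $L^1$-extension on $L^p$. Both follow from density of $L^\infty$ in $L^p$ and the continuous inclusion $L^p(\nu)\subset L^1(\nu)$, since $\nu$ is finite.
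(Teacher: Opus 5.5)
Your proposal is correct and follows the paper's proof essentially step for step. You use Herglotz's theorem together with the comparison $0\leq P_{f\,d\sigma}\circ\phi\leq \|f\|_\infty P_\nu$ to define $A_{\phi,\psi}$ on $L^\infty(\sigma)$ with norm at most $4$, the Harnack bound at $\phi(0)$ for the $L^1$ estimate, extension by density, and Riesz--Thorin at $p=2$ to obtain exactly the stated constant; your remark that uniqueness follows because a complex measure is determined by its Poisson integral makes explicit a point the paper leaves implicit.
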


\begin{proof}
	If we set $w=\phi(z)$ in the  formula~\eqref{eq:AC1}, the left hand side is precisely the left hand side of~\eqref{eq:AC2} evaluated at $z$. Therefore, we obtain 
	\[
	P_\sigma \circ \phi = P_{\nu}.
	\]
	
Let $f\in L^\infty(\sigma)$ such that $0\leq f(\zeta)\leq 1$ $\sigma$-a.e. Since the Poisson kernel 
$$\frac{1-|z|^2}{|\zeta-z|^2}, \quad \zeta \in \T, z \in \D,$$ is strictly positive, we have 
\[
0\leq P_{fd\sigma}(w)\leq P_\sigma(w), \quad w \in \D,
\]	
whence
\[
0\leq P_{fd\sigma}(\phi(z))\leq P_\sigma(\phi(z))=P_\nu(z), \quad z \in \D.
\]
	Since $P_{fd\sigma}$ is harmonic (and positive) and $\phi$ is analytic, $P_{fd\sigma}\circ\phi$ is a positive harmonic function in $\bbD$, and so, again by Herglotz's theorem, this function  is the Poisson integral of a positive Borel measure $\mu_f$. In other words, 
	\begin{equation}\label{eq:compo for measures}
		P_{fd\sigma} \circ \phi =P_{\mu_f}.
	\end{equation}
	 The inequality above becomes
	\[
	0\leq P_{\mu_f}(z)\leq P_\nu(z), \quad z \in \D.
	\]
	This means that the Poisson integral of $\nu-\mu_f$
is positive, whence 	$\nu-\mu_f$ itself is a positive measure, i.e., $\mu_f\leq \nu$.
Therefore, $\mu_f$ is absolutely continuous with respect to $\nu$ and has Radon--Nikodym derivative $\widetilde f\in L^1(\nu)$ which satisfies $0\leq \widetilde f(\eta)\leq 1$ for $\nu$-almost $\eta\in \T$.

Define  $$A_{\phi, \psi}:L^\infty(\sigma)\to L^\infty(\nu),$$  first for positive functions by $A_{\phi, \psi} f=\widetilde f$ and then, by linearity, for all $f\in L^\infty(\sigma)$. Then~\eqref{eq:compo for measures} becomes
\begin{equation}\label{eq:AC6.5}
	P_{fd\sigma} \circ \phi =P_{(A_{\phi, \psi} f)d\nu}.
\end{equation}
Since computing its norm only uses positive functions, the above computation shows that 
\begin{equation}\label{0009iIU}
\|A_{\phi, \psi}\|_{L^{\infty}(\sigma) \to L^{\infty}(\nu)} \leq 4,
\end{equation}
which  shows the boundedness of the operator $A_{\phi, \psi}$ between the two $L^{\infty}$ spaces. 

Our next step is to prove the boundedness of $A_{\phi, \psi}$ between the two $L^1$ spaces. First, let $f\in L^\infty(\sigma)$, $f\geq 0$. We know that $\widetilde f= A_{\phi, \psi} f\geq 0$, so, via the mean value property for harmonic functions, its $L^1(\nu)$ norm is the value at $0$ of the Poisson integral  
of $\mu_f$. Therefore, using~\eqref{eq:compo for measures},
\[
\|A_{\phi, \psi} f\|_{L^1(\nu)} =P_{\mu_f}(0)=P_f(\phi(0))
= \int_{\T} \frac{1-|\phi(0)|^2}{|\zeta-\phi(0)|^2} f(\zeta) \,d\sigma(\zeta).
\]
An easy geometric argument yields 
\[
\frac{1-|\phi(0)|^2}{|\zeta-\phi(0)|^2}\leq \frac{1+|\phi(0)|}{1-|\phi(0)|},
\]
whence 
\[
\|A_{\phi, \psi} f\|_{L^1(\nu)}\leq \frac{1+|\phi(0)|}{1-|\phi(0)|}\int_{\T}f(\zeta)\,d\sigma(\zeta) =\frac{1+|\phi(0)|}{1-|\phi(0)|} \|f\|_{L^1(\sigma)}.
\]
From here we can extend the definition of $A_{\phi, \psi}$ by continuity and linearity to all of  $L^1(\sigma)$ and obtain the norm estimate 
\begin{equation}\label{77shdfsdf}
\|A_{\phi, \psi} f\|_{L^1(\nu)}\leq 4 \frac{1+|\phi(0)|}{1-|\phi(0)|} \|f\|_{L^1(\sigma)}.
\end{equation}
Moreover, the action of $A_{\phi, \psi}$ on a general $f \in L^1(\sigma)$ is described by 
$$P_{fd\sigma} \circ \phi=P_{(A_{\phi, \psi} f) d\nu}.$$  Indeed, if $f_n \in L^{\infty}(\sigma)$ and $f_n\to f$ in $L^1(\sigma)$, then their Poisson integrals converge pointwise to $P_{(A_{\phi, \psi} f) d\nu}$ in $\D$, and so, passing to the limit yields
\[
P_{fd\sigma}(\phi(z))=\int_{\T}\frac{1-|z|^2}{|\eta-z|^2}(A_{\phi, \psi} f)(\eta)\, d\nu(\eta).
\]

The Riesz--Thorin interpolation theorem now implies that the operator $A_{\phi, \psi}:L^p(\sigma)\to L^p(\nu)$ is bounded for all $1 \leq p \leq \infty$. Using \eqref{0009iIU} and \eqref{77shdfsdf} and the operator norm estimate provided by the Riesz--Thorin interpolation theorem, $\|A_{\phi, \psi}\|_{L^2(\sigma) \to L^2(\mu)}$ satisfies \eqref{eq:L2norm}. 
\end{proof}

The previous discussion has some important implications for the Cauchy transform $\mathcal{K}_{\sigma} f$ of a function $ f\in L^2(\sigma) $, which is defined by the integral formula 
\[
(\KK_\sigma f)(z):=\int_{\T}\frac{f(\zeta)}{1-z\overline{\zeta}}\, d\sigma(\zeta), \quad z \in \D.
\]
Note that $\mathcal{K}_{\sigma} f$ is an analytic function on $\D$.
We first assume that $ f $ is real-valued. Since $\zeta \in \T$, we have 
\begin{equation}\label{eq:AC decomposition}
\Re \frac{1}{1-z\overline{\zeta}}=\frac{1}{2}\left( \frac{1-|z|^2}{|\zeta-z|^2}+1 \right), \quad z \in \D,
\end{equation}
and so 
\[
\Re \left[(\KK_\sigma f)(z)\right] =\tfrac{1}{2}P_{f}(z)+
\tfrac{1}{2}\int_{\T}f\,d\sigma.
\]
Observe that the second term on the right hand side of the previous line is a real constant.
Composing the last equality with $ \phi $, we obtain the following: 
\[
\Re \left[(\KK_\sigma f)(\phi(z))\right]= \tfrac{1}{2}P_{f}(\phi(z))+
\tfrac{1}{2}\int_{\T} f\,d\sigma.
\]
Using~\eqref{eq:AC6.5}, this last equality can be written as
\begin{equation}\label{eq:AC4}
\Re \left[(\KK_\sigma f)(\phi(z))\right]= \tfrac{1}{2}P_{(A_{\phi, \psi} f) d\nu}(z)+
\tfrac{1}{2}\int_{\T} f\,d\sigma.
\end{equation}

On the other hand, we note that 
\[
K_\nu (A_{\phi, \psi} f)(z)=\int_{\T}\frac{(A_{\phi, \psi} f)(\eta)}{1-z\overline{\eta}}\,d\nu(\eta).
\]
Another use of \eqref{eq:AC decomposition} gives us the following: 
\begin{equation}\label{eq:AC5}
\Re \left[K_\nu (A_{\phi, \psi} f)(z)\right]=\tfrac{1}{2}P_{(A_{\phi, \psi} f)d\nu}(z)+\tfrac{1}{2}\int_{\T}A_{\phi, \psi} f\, d\nu.
\end{equation}

Comparing~\eqref{eq:AC4} and~\eqref{eq:AC5}, it follows that the real part of the analytic function $(\KK_\sigma f)(\phi(z))-K_{\nu}(A_{\phi, \psi} f)(z) $ is constant, and so the function itself is constant in that 
\begin{equation*}
	(\KK_\sigma f)(\phi(z))-K_\nu (A_{\phi, \psi} f)(z)=\xi\in\C.
\end{equation*}
Since the constant $\xi$ depends linearly on $f$, we can write it as a linear functional $\xi(f)$. 
Evaluating the above when $z = 0$, we obtain 
\[
\begin{split}
\xi(f)&= 	(\KK_\sigma f)(\phi(0))-K_\nu (A_{\phi, \psi} f)(0)\\
&=\int_{\T}\frac{f(\zeta)}{1-\phi(0)\overline{\zeta}}\, d\sigma(\zeta)-\int_{\T}(A_{\phi, \psi} f)(\eta)\,d\nu(\eta).
\end{split}
\]
However, 
\begin{align*}
|\xi(f)| & \leq \Big|\Big\langle f, \frac{1}{1-\overline{\phi(0)} \zeta}\Big\rangle_{L^2(\sigma)}\Big|+|\<A_{\phi, \psi} f, 1\>_{L^2(\nu)}|\\
&\leq \text{\rm const}\| f \|_{L^2(\sigma)},
\end{align*}
where we have used that $ A_{\phi, \psi}  $ is a bounded operator from $ L^2(\sigma) $ to $ L^2(\nu) $ (Theorem \ref{th:Aleksandrov}). Therefore, by the Riesz representation theorem, there is a  unique $G \in L^2(\sigma)$ such that $\xi(f) = \langle f, G\rangle_{L^2(\sigma)}$. We record this result in the following corollary for later use.

\begin{corollary}\label{co:Aleksandrov Cauchy}
Suppose $\phi, \psi\in H^\infty_1$, $\sigma, \nu$ are the Aleksandrov--Clark measures for $\psi$ and $\psi\circ\phi$ respectively, and $A_{\phi,\psi}:L^2(\sigma)\to L^2(\nu)$ is the operator defined in Theorem~\ref{th:Aleksandrov}. Then there exists  a unique $G\in L^2(\sigma)$, such that for all $f \in L^2(\sigma)$ and all $z \in \D$ we have
	\begin{equation}\label{eq:AC6}
		(\KK_\sigma f)(\phi(z))-K_\nu (A_{\phi, \psi} f)(z) =\< f,G \>_{L^2(\sigma)}.
	\end{equation}
\end{corollary}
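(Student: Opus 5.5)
The argument preceding Corollary~\ref{co:Aleksandrov Cauchy} already does most of the work. What remains is to remove the restriction to real-valued $f$, to check linearity and continuity of the constant term, and to represent it by Riesz. I would proceed in that order.

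\emph{Step 1: real-valued $f$.} Take $f \in L^2(\sigma)$ real-valued. By Theorem~\ref{th:Aleksandrov} with $p=2$, $A_{\phi,\psi}f \in L^2(\nu)$. Since $\sigma$ and $\nu$ are finite measures, $L^2 \subset L^1$, so \eqref{eq:AC6.5} holds, as it was established for all of $L^1(\sigma)$. I will also note that $A_{\phi,\psi}$ maps real functions to real functions. This is clear from the construction: positive functions go to positive functions, and the extension is by linearity and $L^1$-continuity. Hence $P_{(A_{\phi,\psi}f)d\nu}$ is real. Using \eqref{eq:AC decomposition}, the real parts of $(\KK_\sigma f)\circ\phi$ and of $K_\nu(A_{\phi,\psi}f)$ are given by \eqref{eq:AC4} and \eqref{eq:AC5}. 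Their difference is therefore a real constant. An analytic function on $\D$ with constant real part is constant (open mapping, or Cauchy--Riemann), so
\[
(\KK_\sigma f)(\phi(z)) - K_\nu(A_{\phi,\psi}f)(z) = \xi(f).
\]

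\emph{Step 2: complex $f$.} Write $f = u + iv$ with $u, v$ real. Both sides of the difference are complex-linear in $f$, since $\KK_\sigma$, $C_\phi$, $K_\nu$ and $A_{\phi,\psi}$ are all linear. So the difference equals $\xi(u) + i\xi(v)$, which is again independent of $z$. This defines $\xi(f)$ for all $f$, and $\xi$ is linear. Evaluating at $z=0$ gives the explicit formula
\[
\xi(f) = \int_{\T} \frac{f(\zeta)}{1-\phi(0)\overline{\zeta}}\, d\sigma(\zeta) - \int_{\T} A_{\phi,\psi}f \, d\nu .
\]

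\emph{Step 3: boundedness and representation.} The first term is $\langle f, (1-\overline{\phi(0)}\zeta)^{-1}\rangle_{L^2(\sigma)}$. The function $(1-\overline{\phi(0)}\zeta)^{-1}$ is bounded on $\T$, so it lies in $L^2(\sigma)$. The second term is $\langle A_{\phi,\psi}f, 1\rangle_{L^2(\nu)}$, and it is bounded by $\|A_{\phi,\psi}\|\,\nu(\T)^{1/2}\,\|f\|_{L^2(\sigma)}$ using \eqref{eq:L2norm}. Thus $\xi$ is a bounded linear functional on $L^2(\sigma)$. The Riesz representation theorem gives $G \in L^2(\sigma)$ with $\xi(f) = \langle f, G\rangle_{L^2(\sigma)}$. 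This $G$ is unique because a vector in a Hilbert space is determined by its inner products with all $f$.

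The only step requiring care is Step 1: confirming that $A_{\phi,\psi}$ preserves real-valuedness and that \eqref{eq:AC6.5} holds on all of $L^2(\sigma)$, not just $L^\infty$. Both come from the density and continuity argument in the proof of Theorem~\ref{th:Aleksandrov}.
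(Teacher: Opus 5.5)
Your proposal is correct and follows the paper's own argument: compare real parts via \eqref{eq:AC decomposition} and \eqref{eq:AC6.5}, conclude the difference is constant, evaluate at $z=0$, bound the functional using \eqref{eq:L2norm}, and apply Riesz representation. The details you add are points the paper leaves implicit: that $A_{\phi,\psi}$ preserves real-valuedness, that \eqref{eq:AC6.5} holds on $L^2(\sigma)\subset L^1(\sigma)$, and the complex-linear extension from real $f$ to complex $f$.
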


\section{General composition results}\label{GeneralC}

We start with a simple lemma. Recall the backward shift operator $S^{*}$ on $H^2$ from \eqref{bkshift}.

\begin{lemma}\label{le:action of S*}
 For  $\psi\in H^\infty_1$ and $g \in H^2$, the following are equivalent. 
    \begin{enumerate}
        \item $g\in\HH(z\psi)$;
        \item  $S^*g\in\HH(\psi)$.
    \end{enumerate} 
\end{lemma}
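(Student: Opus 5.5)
We will prove the equivalence by working directly with reproducing kernels and the positivity criterion of Lemma~\ref{le:basic}~(e). The starting point is the identity
\[
\frac{1-\overline{\lambda\psi(\lambda)}\,z\psi(z)}{1-\overline{\lambda}z}
=1+\overline{\lambda}z\,\frac{1-\overline{\psi(\lambda)}\psi(z)}{1-\overline{\lambda}z},
\]
which follows from $1-\overline{\lambda}z\,\overline{\psi(\lambda)}\psi(z) = (1-\overline{\lambda}z) + \overline{\lambda}z\bigl(1-\overline{\psi(\lambda)}\psi(z)\bigr)$. In words, $k^{z\psi}_\lambda(z) = 1 + \overline{\lambda}z\,k^{\psi}_\lambda(z)$. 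By the standard sum theorem for reproducing kernels, $\HH(z\psi)$ is therefore the space $\C + z\HH(\psi)$, with norm
\[
\|g\|^2_{\HH(z\psi)} = \min\bigl\{ |c|^2 + \|h\|^2_{\HH(\psi)} : g = c + zh \bigr\}.
\]
Here we use that the kernel $\overline{\lambda}z\,k^\psi_\lambda(z)$ is the reproducing kernel of $z\HH(\psi)$, with $\|zh\| = \|h\|_{\HH(\psi)}$, since multiplication by $z$ is injective.

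Since a decomposition $g = c + zh$ forces $c = g(0)$ and $h = S^*g$, the representation is unique. Hence $g \in \HH(z\psi)$ if and only if $S^*g \in \HH(\psi)$, and in that case
\[
\|g\|^2_{\HH(z\psi)} = |g(0)|^2 + \|S^*g\|^2_{\HH(\psi)}.
\]

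If we want to stay within the tools the paper lists, each direction can instead be checked with Lemma~\ref{le:basic}~(e) directly.

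\textbf{Direction (b)$\Rightarrow$(a).} Write $g(z) = g(0) + z(S^*g)(z)$. We need to show that $M^2 k^{z\psi}_\lambda(z) - g(z)\overline{g(\lambda)} \gg 0$ for some $M$. Expanding gives cross terms of the form $g(0)\,\overline{\lambda}\,\overline{(S^*g)(\lambda)}$ and its conjugate. We control these using the elementary estimate
\[
(a+b)(z)\overline{(a+b)(\lambda)} \ll 2\bigl(a(z)\overline{a(\lambda)} + b(z)\overline{b(\lambda)}\bigr),
\]
which holds because
\[
2(a\bar a + b\bar b) - (a+b)\overline{(a+b)} = (a-b)\overline{(a-b)} \gg 0 .
\]
Applying this with $a = g(0)$ and $b = z\,S^*g$, and combining it with (e) for $S^*g$ in $\HH(\psi)$ and with Lemma~\ref{le:basic}~(a),(f), we obtain the required positivity.

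\textbf{Direction (a)$\Rightarrow$(b).} This is the direction I expect to be the main obstacle, because we must extract $S^*g$ from a positivity statement about $g$. Pure kernel manipulation cannot easily isolate the factor $\overline{\lambda}z$. So the plan is to rely on the sum-of-kernels description: the map $(c,h) \mapsto c + zh$ from $\C \oplus \HH(\psi)$ is a coisometry onto $\HH(z\psi)$. This is the standard theorem that the RKHS of $K_1 + K_2$ equals $\HH_{K_1} + \HH_{K_2}$, which I will quote from \cite{MR3526117}. Surjectivity of this map then gives $g = c + zh$ with $h \in \HH(\psi)$, and comparing values at $0$ shows $h = S^*g$.
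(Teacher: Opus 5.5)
Your proposal is correct and follows the paper's approach. The paper quotes $\HH(z\psi)=\C\oplus z\HH(\psi)$ from \cite[Thm.~18.8]{Hb2-book} and then, exactly as you do, uses the unique decomposition $g=g(0)+z\,S^*g$; the only difference is that you derive that decomposition from the kernel identity $k^{z\psi}_\lambda = 1+\overline{\lambda}z\,k^{\psi}_\lambda$ and the sum-of-kernels theorem, and your optional positivity check for (b)$\Rightarrow$(a) is valid but not needed once the decomposition is available.
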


\begin{proof}
	(a) $\implies$ (b): From \cite[Thm.~18.8]{Hb2-book} we know that $$\HH(z\psi)=\C\oplus z\HH(\psi).$$ Thus, if $g\in\HH(z\psi)$, we have $g=a+zg_1$ with $ g_1\in\HH(\psi)$, and $$S^*g=S^*(zg_1)=g_1.$$ 
    
    (b) $\implies$ (a): Suppose $g_1:=S^*g\in\HH(\psi)$. Then $$Sg_1=SS^*g=g-g(0),$$ or equivalently, $$g=g(0)+Sg_1\in \C \oplus z\HH(\psi)=\HH(z\psi),$$
    which completes the proof. 
\end{proof}

The following lemma originates from the work of Clark, who proved it for inner functions. The general case is due to Ball and Lubin and a discussion of this is found in \cite[Ch.~II, IV]{MR1289670}. For a positive Borel measure $\mu$ on $\T$, let $H^2(\mu)$ denote the closure of $\C[z]$ in $L^2(\mu)$. 

\begin{lemma}\label{le:cauchy integral rep}
	For a function $\psi\in H^\infty_1$, let $\sigma$ be its Aleksandrov--Clark measure from \eqref{eq:AC1}. Then the linear transformation 
	\[
	V_\psi f(z):=(1-\psi(z))\int_{\T} \frac{f(\zeta)}{1- \overline{\zeta} z}\,d\sigma(\zeta), \quad z \in \D,
	\]
	is a partial isometry from $L^2(\sigma)$ onto $\HH(\psi)$, with $\ker(V_\psi)=(H^2(\sigma))^\perp$.
\end{lemma}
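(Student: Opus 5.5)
The idea is to compute the kernel of $V_\psi$ directly and compare it with the reproducing kernel $k^\psi_\lambda$ of $\HH(\psi)$. The central identity comes from \eqref{eq:AC1}. Write $H(w)=\frac{1+\psi(w)}{1-\psi(w)}$. Herglotz's theorem gives
\[
H(w)=i\,\Im H(0)+\int_{\T}\frac{\zeta+w}{\zeta-w}\,d\sigma(\zeta).
\]
A routine computation, using $\frac{\zeta+w}{\zeta-w}+\overline{\frac{\zeta+\lambda}{\zeta-\lambda}}=\frac{2(1-\overline\lambda w)}{(1-\overline\zeta w)(1-\zeta\overline\lambda)}$ for $\zeta\in\T$, then yields
\[
\frac{1-\overline{\psi(\lambda)}\psi(z)}{(1-\psi(z))(1-\overline{\psi(\lambda)})}
=\frac{H(z)+\overline{H(\lambda)}}{2}\cdot\frac{1}{1}
=(1-\overline\lambda z)\int_{\T}\frac{d\sigma(\zeta)}{(1-\overline\zeta z)(1-\zeta\overline\lambda)}.
\]
In other words,
\[
k^\psi_\lambda(z)=(1-\psi(z))\overline{(1-\psi(\lambda))}\,\big\langle c_z,c_\lambda\big\rangle_{L^2(\sigma)},
\]
where $c_\lambda(\zeta)=(1-\overline\lambda\zeta)^{-1}$.

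With this identity in hand, define $E:\D\to L^2(\sigma)$ by $E(\lambda)=\overline{(1-\psi(\lambda))}\,c_\lambda$. Then $V_\psi f(z)=\langle f,E(z)\rangle_{L^2(\sigma)}$, since $\overline{c_z(\zeta)}=(1-z\overline\zeta)^{-1}$. The standard RKHS principle (see \cite{MR3526117}) now applies. If $K(z,\lambda)=\langle E(\lambda),E(z)\rangle$, then the map $f\mapsto\langle f,E(\cdot)\rangle$ is a partial isometry from the Hilbert space onto $\HH_K$. Its kernel is the orthogonal complement of $\overline{\operatorname{span}}\{E(\lambda)\}$.

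To justify this principle briefly: on the finite span, $V_\psi E(\lambda)=K(\cdot,\lambda)=k^\psi_\lambda$. One has $\|\sum c_jE(\lambda_j)\|^2=\|\sum c_jk^\psi_{\lambda_j}\|^2_{\HH(\psi)}$. Hence $V_\psi$ is an isometry from the closed span onto the closed span of the kernels, which is $\HH(\psi)$. Moreover, $V_\psi$ vanishes on the orthogonal complement, because $\langle f,E(z)\rangle=0$ for all $z$. Since the kernels span a dense subspace, the range is all of $\HH(\psi)$.

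It remains to identify the closed span of $\{E(\lambda)\}$, equivalently of $\{c_\lambda:\lambda\in\D\}$ (the factors $1-\psi(\lambda)$ never vanish), with $H^2(\sigma)$. Expanding $c_\lambda=\sum_n\overline\lambda^n\zeta^n$ gives a series converging uniformly on $\T$, hence in $L^2(\sigma)$ because $\sigma$ is finite. This shows $c_\lambda\in H^2(\sigma)$. Conversely, if $g\perp c_\lambda$ for all $\lambda$, then $\sum_n\lambda^n\langle g,\zeta^n\rangle=0$ on $\D$. The coefficients are bounded, so this is a power series, and therefore $g\perp\zeta^n$ for all $n\ge0$, that is, $g\perp H^2(\sigma)$. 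Thus the closed span is exactly $H^2(\sigma)$, and $\ker V_\psi=(H^2(\sigma))^\perp$.

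The main obstacle is the kernel identity itself. The point needing care is getting the Herglotz representation and its imaginary constant right. Note that this constant cancels in $H(z)+\overline{H(\lambda)}$, so no normalization issue survives. The identity also requires $\psi\ne1$ identically, which is needed for $\sigma$ to be defined at all.
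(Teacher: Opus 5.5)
Your proof is correct. The paper does not prove this lemma; it only cites Clark, Ball--Lubin and Sarason's book, and your route is essentially the standard proof given there: the Herglotz identity exhibits $k^\psi_\lambda$ as the Gram kernel of $\overline{(1-\psi(\lambda))}\,c_\lambda$ in $L^2(\sigma)$, this yields the partial isometry onto $\HH(\psi)$, and the density of the Cauchy kernels $c_\lambda$ in $H^2(\sigma)$ identifies $\ker(V_\psi)$.

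One cosmetic slip: with the inner product linear in the first slot, your first Gram identity should read $\langle c_\lambda, c_z\rangle_{L^2(\sigma)}$ rather than $\langle c_z, c_\lambda\rangle_{L^2(\sigma)}$. This is the order you actually use later in $K(z,\lambda)=\langle E(\lambda),E(z)\rangle$. Also, the stray factor $\frac{1}{1}$ should be deleted.
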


The next lemma is the central result of this section.

\begin{lemma}\label{le:my pet result}
	Suppose $\psi ,\phi\in H^\infty_1$. Then
	\[
	C_\phi(z\HH(\psi ))\subset \HH(z(\psi \circ\phi)).
	\]
\end{lemma}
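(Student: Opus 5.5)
The plan is to represent $g\in\HH(\psi)$ through the Clark-type map of Lemma~\ref{le:cauchy integral rep} and then transport the Cauchy integral through $\phi$ using Corollary~\ref{co:Aleksandrov Cauchy}. Let $\sigma,\nu$ be the Aleksandrov--Clark measures of $\psi$ and $\psi\circ\phi$, and write $A=A_{\phi,\psi}$. Since $V_\psi$ maps $L^2(\sigma)$ onto $\HH(\psi)$, every $g\in\HH(\psi)$ has the form $g=(1-\psi)\,\KK_\sigma f$ with $f\in L^2(\sigma)$. We must show that
\[
C_\phi(zg)=\phi\cdot(g\circ\phi)=(1-\psi\circ\phi)\,\phi\,(\KK_\sigma f)\circ\phi
\]
lies in $\HH(z(\psi\circ\phi))$.

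\textbf{Step 1: absorb the factor $\phi$.} The key algebraic observation is that for $\zeta\in\T$,
\[
\frac{\phi(z)}{1-\phi(z)\overline{\zeta}}=\frac{\zeta}{1-\phi(z)\overline{\zeta}}-\zeta .
\]
Integrating against $f\,d\sigma$ gives
\[
\phi\,(\KK_\sigma f)\circ\phi=(\KK_\sigma (\zeta f))\circ\phi-\int_{\T}\zeta f\,d\sigma ,
\]
and $\zeta f$ is again in $L^2(\sigma)$.

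\textbf{Step 2: transport through $\phi$.} Applying Corollary~\ref{co:Aleksandrov Cauchy} to $\zeta f$ gives $(\KK_\sigma(\zeta f))\circ\phi=\KK_\nu(A(\zeta f))+\<\zeta f,G\>_{L^2(\sigma)}$. Hence
\[
\phi\cdot(g\circ\phi)=V_{\psi\circ\phi}\big(A(\zeta f)\big)+c\,(1-\psi\circ\phi)
\]
for some constant $c\in\C$. By Lemma~\ref{le:cauchy integral rep} applied to $\psi\circ\phi$, the first term belongs to $\HH(\psi\circ\phi)$.

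\textbf{Step 3: membership in the target space.} It remains to show that both $\HH(\psi\circ\phi)$ and the function $1-\psi\circ\phi$ are contained in $\HH(z(\psi\circ\phi))$. For this I will use Lemma~\ref{le:action of S*}, which says that $h\in\HH(zb)$ if and only if $S^*h\in\HH(b)$, here with $b=\psi\circ\phi$. I will invoke two standard facts about de Branges--Rovnyak spaces: $\HH(b)$ is $S^*$-invariant, and $S^*b\in\HH(b)$ for every $b\in H^\infty_1$. The first fact gives $\HH(b)\subset\HH(zb)$. The second gives $S^*(1-b)=-S^*b\in\HH(b)$, so $1-b\in\HH(zb)$. (Alternatively, one can read these off from the kernel identity $k^{zb}_\lambda=1+\overline{\lambda}z\,k^b_\lambda$ together with Corollary~\ref{co:inclusion between RKHS}.) Since $\HH(z(\psi\circ\phi))$ is a linear space, the two terms in Step 2 sum to an element of it, which proves the lemma.

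\textbf{Main obstacle.} The delicate point is Step 1. A naive approach writes $g\circ\phi=h+c(1-\psi\circ\phi)$ and then tries to multiply by $\phi$, but multiplication by $\phi$ need not preserve $\HH(\psi\circ\phi)$. The fix is to move the factor $\phi$ inside the Cauchy integral, where it becomes multiplication by $\zeta$ on $L^2(\sigma)$, and only then apply Corollary~\ref{co:Aleksandrov Cauchy}. The only care needed is that the constant $\int_{\T}\zeta f\,d\sigma$ and the Riesz constant $\<\zeta f,G\>$ produce just multiples of $1-\psi\circ\phi$, which Step 3 handles.
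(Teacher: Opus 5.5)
Your proposal is correct and follows the paper's proof essentially step for step. Like the paper, you represent $g$ via Lemma~\ref{le:cauchy integral rep}, use the identity $\phi/(1-\phi\overline{\zeta})=\zeta/(1-\phi\overline{\zeta})-\zeta$ to turn the factor $\phi$ into multiplication by $\zeta$, apply Corollary~\ref{co:Aleksandrov Cauchy}, and finish with Lemma~\ref{le:action of S*} together with the $S^*$-invariance of $\HH(\psi\circ\phi)$ and the fact that $S^*(\psi\circ\phi)\in\HH(\psi\circ\phi)$. The only differences are cosmetic: you apply Lemma~\ref{le:action of S*} to each summand separately rather than to the sum, and you write $S^*(1-\psi\circ\phi)=-S^*(\psi\circ\phi)$ with the correct sign, where the paper has a harmless slip.
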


\begin{proof}	Take $f_1=zh$, with $h\in\HH(\psi )$. We need to prove that $$\phi (h\circ\phi)\in  \HH(z(\psi \circ\phi)).$$ By Lemma~\ref{le:action of S*}, the above is equivalent to
	\begin{equation}\label{eq:to prove2}
		S^*(\phi (h\circ\phi))\in \HH(\psi \circ\phi).
	\end{equation}
	
	To prove this inclusion, we start by using Lemma~\ref{le:cauchy integral rep} and write
	\[
	h(z)=(1-\psi (z))\int_{\T} \frac{f(\zeta)}{1-z\overline{\zeta}}\,d\sigma(\zeta)
	\] 
	for some $f\in L^2(\sigma)$.
	Then
	\[
	h(\phi(z))=(1-\psi  (\phi(z)))\int_{\T} \frac{f(\zeta)}{1-\phi(z)\overline{\zeta}}\,d\sigma(\zeta).
	\]
	Multiplying the previous equation by $\phi(z)$ and using the identity $$\frac{\phi(z)}{1-\phi(z)\overline{\zeta}}=\frac{\zeta}{1-\phi(z)\overline{\zeta}}-\zeta, \quad z \in \D, \zeta \in \T,$$ we obtain
	\[
	\begin{split}
		\phi(z)h(\phi(z))& = (1-\psi  (\phi(z)))\left(\int_{\T} \frac{\zeta f(\zeta)}{1-\phi(z)\overline{\zeta}}\,d\sigma(\zeta)
		- \int_{\T} \zeta f(\zeta)\,d\sigma(\zeta)\right) \\
		&=(1-\psi  (\phi(z)))\left(\KK_\sigma(\zeta f)(\phi(z))-\int_{\T}\zeta f(\zeta) \, d\sigma (\zeta)\right).
	\end{split}
	\]
	Corollary~\ref{co:Aleksandrov Cauchy} gives us 
	\[
	\KK_\sigma(\zeta f)(\phi(z))=K_\nu (A_{\phi, \psi} (\zeta f))(z)+\< \zeta f, G \>_{L^2(\sigma)},
	\]
	and therefore, writing $$B=\< \zeta f, G-1 \>_{L^2(\sigma)},$$ we see that 
	$$
		\phi(z)h(\phi(z))=(1-\psi  (\phi(z)))K_\nu (A_{\phi, \psi}(\zeta f))(z)+B(1-\psi  (\phi(z))).
	$$
	Since $\nu$ is the Aleksandrov--Clark measure for $\psi \circ \phi$, we can apply Lemma~\ref{le:cauchy integral rep}, applied to $\psi  \circ\phi$, to see that the first term belongs to  $\HH(\psi  \circ\phi)$, as will $S^{*}$ applied to this term (de Branges--Rovnyak spaces are $S^{*}$-invariant \cite[Thm.~18.13]{Hb2-book}). Moreover, 
	\[
	BS^*(1-\psi  \circ\phi)=BS^*(\psi  \circ\phi).
	\]
A general property of de Branges--Rovnyak spaces says $$ S^*(\psi  \circ\phi)\in \HH(\psi\circ\phi) $$ \cite[Thm.~18.18]{Hb2-book}, which completes the proof. 	
	\end{proof}

Lemma \ref{le:my pet result} yields the following two important theorems.

\begin{theorem}\label{th:the general result}
	If $b,\phi\in H^\infty_1$, then
	\[
	C_\phi(\HH(b))\subset \HH(z(b\circ\phi)).
	\]
    Moreover, if $\phi(0)=0$, then $C_\phi$ acts contractively from $\HH(b)$ into $\HH(z(b\circ \phi)$.
\end{theorem}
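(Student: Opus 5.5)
The plan is to reduce the inclusion to Lemma~\ref{le:my pet result} by splitting off the value at the origin. For $f \in \HH(b)$, write $f = f(0) + z\,S^*f$. Since $\HH(b)$ is $S^*$-invariant (the fact \cite[Thm.~18.13]{Hb2-book} already used in the proof of Lemma~\ref{le:my pet result}), we have $S^*f \in \HH(b)$, and hence $zS^*f \in z\HH(b)$. Lemma~\ref{le:my pet result} with $\psi = b$ then gives
\[
C_\phi(zS^*f) \in \HH(z(b\circ\phi)).
\]
The remaining term is $C_\phi(f(0)) = f(0)$, a constant. By Lemma~\ref{Hbcopnstants} (or directly from $\HH(z\psi)=\C\oplus z\HH(\psi)$ in the proof of Lemma~\ref{le:action of S*}), $\HH(z(b\circ\phi))$ contains the constants, because $z(b\circ\phi)$ vanishes at $0$. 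Therefore
\[
C_\phi f = f(0) + C_\phi(zS^*f) \in \HH(z(b\circ\phi)),
\]
which is the desired inclusion. Boundedness then follows from the closed graph theorem, exactly as in the remark after Corollary~\ref{co:inclusion between RKHS}: both spaces are reproducing kernel Hilbert spaces contained in $H^2$, so point evaluations are continuous and the graph of $C_\phi$ is closed.

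For the contractivity claim when $\phi(0)=0$, I would not use the measure machinery. Instead I would compare kernels via Lemma~\ref{le:basic}(c), showing
\[
\frac{1-z\overline{w}\, b(\phi(z))\overline{b(\phi(w))}}{1-z\overline{w}} \;-\; \frac{1-b(\phi(z))\overline{b(\phi(w))}}{1-\phi(z)\overline{\phi(w)}} \gg 0 .
\]
The cleanest route is through inclusions. By Corollary~\ref{co:who contains the constants}(a), $C_\phi$ is a contraction from $\HH(b)$ into $\HH(b\circ\phi)$. So it suffices that $\HH(b\circ\phi)$ is contractively contained in $\HH(z(b\circ\phi))$, i.e. that for $\beta := b\circ\phi$,
\[
\frac{1-z\overline{w}\beta(z)\overline{\beta(w)}}{1-z\overline{w}}-\frac{1-\beta(z)\overline{\beta(w)}}{1-z\overline{w}} = \beta(z)\overline{\beta(w)}.
\]
This difference is positive by Lemma~\ref{le:basic}(f), so Corollary~\ref{co:inclusion between RKHS} applies with $M=1$. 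Composing the two contractions gives $\|C_\phi\|_{\HH(b)\to\HH(z(b\circ\phi))}\le 1$.

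The main obstacle is in the first part: Lemma~\ref{le:my pet result} is stated only as a set inclusion, so the argument needs $S^*$-invariance of $\HH(b)$ and the closed graph theorem to turn it into a bounded operator. In the case $\phi(0)=0$, contractivity sidesteps this entirely through the kernel computation above. Once those two points are in place, I expect no further difficulty.
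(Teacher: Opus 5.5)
Your proof is correct and follows the paper's argument. You split $f$ into its value at $0$ plus an element of $z\HH(b)$ (the paper writes this as $\HH(b)\subset\HH(zb)=\C\oplus z\HH(b)$, which is equivalent to your use of $S^*$-invariance) and apply Lemma~\ref{le:my pet result} with $\psi=b$; for $\phi(0)=0$ you compose the contraction of Corollary~\ref{co:who contains the constants}(a) with the contractive inclusion $\HH(b\circ\phi)\subset\HH(z(b\circ\phi))$, which you check directly via the kernel difference $\beta(z)\overline{\beta(w)}\gg0$ where the paper cites Lemma~\ref{le:FM th 27.12}.
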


\begin{proof}
	We have $\HH(b)\subset\HH(zb)=\C\oplus z\HH(b)$ \cite[Thm.~18.8]{Hb2-book}.  Obviously $$C_\phi(\C)=\C\subset \HH(z(b\circ\phi)).$$ Setting $\psi = b$ in Lemma~\ref{le:my pet result} implies that $$C_\phi(z\HH(b))\subset \HH(z(b\circ\phi)),$$ which completes the proof of the fact that $C_\phi(\HH(b))\subset \HH(z(b\circ\phi))$. Moreover, since $\HH(b\circ\phi)$ is contractively contained in $\HH(z(b\circ\phi))$ (see Lemma 7.3), if $\phi(0)=0$, then according to Corollary 3.6 (a), for every $f\in\HH(b)$ we have 
\[ 
\|f\circ\phi\|_{\HH(z(b\circ\phi))}\leq \|f\circ\phi\|_{\HH(b\circ\phi)}\leq \|f\|_{\HH(b)},
\]
which completes the proof. 
\end{proof}

Here is an example that shows why the factor $z$ in $z (b \circ \phi)$ is essential without further assumptions on $b$ or $\phi$. 
If $\lambda \in \D \setminus \{0\}$ and 
$$\phi(z) = b(z) = \frac{\lambda - z}{1 - \overline{\lambda} z},$$
then $\phi$ and $b$ are inner functions, and, via \cite[p.~116]{MR3526203},
$$\HH(b) = \mathcal{K}_b = \C \frac{1}{1 - \overline{\lambda} z}.$$
Moreover, 
$$C_{\phi}(\HH(b)) = \C (1 - \bar \lambda z) \subset  \mathcal{K}_{z^2} = \operatorname{span}\{1, z\}.$$
However, $(b \circ \phi)(z) = z$ and so, again, via \cite[p.~116]{MR3526203},
$$\HH(b \circ \phi) = \mathcal{K}_z = \C 1,$$
while 
$$\HH(z (b \circ \phi)) = \HH(z^2) = \mathcal{K}_{z^2} = \operatorname{span}\{1, z\}.$$
 The paper \cite{MR3176147} contains further examples of this phenomenon. 

 We finish this section with the proof of Theorem \ref{MainT} (b).

\begin{theorem}\label{th:b(0)0}
	If $b(0)=0$, then 
	\begin{equation}\label{eq:b(0)=0}
		C_\phi(\HH(b))\subset \HH \Big(z\frac{b\circ\phi}{\phi}\Big).
	\end{equation}

\end{theorem}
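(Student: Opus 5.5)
Write $b = z b_1$ with $b_1 \in H^\infty_1$ (possible since $b(0)=0$, by Schwarz's lemma). Then $\frac{b\circ\phi}{\phi} = b_1\circ\phi$, so the target space is $\HH(z(b_1\circ\phi))$. This is exactly the space appearing in Lemma~\ref{le:my pet result} with $\psi = b_1$. The plan is to decompose $\HH(b)$ along the structure $\HH(zb_1)=\C\oplus z\HH(b_1)$ and treat the two pieces separately.

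By \cite[Thm.~18.8]{Hb2-book}, every $f\in\HH(b)=\HH(zb_1)$ can be written as $f = a + z h$ with $a\in\C$ and $h\in\HH(b_1)$. Then
$$C_\phi f = a + \phi\,(h\circ\phi).$$
The constant $a$ lies in $\HH(z(b_1\circ\phi))$, because $\HH(z\eta)=\C\oplus z\HH(\eta)$ always contains the constants. For the second piece, Lemma~\ref{le:my pet result} with $\psi=b_1$ gives
$$C_\phi(z\HH(b_1))\subset \HH(z(b_1\circ\phi)),$$
so $\phi\,(h\circ\phi)$ also lies in $\HH(z(b_1\circ\phi))$. Adding the two pieces yields \eqref{eq:b(0)=0}.

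Boundedness then comes for free. Both $\HH(b)$ and $\HH(z(b_1\circ\phi))$ are reproducing kernel Hilbert spaces, so $C_\phi$ between them has closed graph: point evaluations are continuous in both spaces. The closed graph theorem therefore gives boundedness, exactly as in the remark following Corollary~\ref{co:inclusion between RKHS}.

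Essentially all the analytic work sits inside Lemma~\ref{le:my pet result}, which relies on the Aleksandrov--Clark machinery. The only step needing care here is the bookkeeping: the identification $b\circ\phi/\phi = b_1\circ\phi$ must hold as an $H^\infty_1$ function even where $\phi$ vanishes, and it does, because $b_1\circ\phi$ is analytic. One also needs the decomposition $\HH(zb_1)=\C\oplus z\HH(b_1)$, which holds for arbitrary $b_1\in H^\infty_1$. Contractivity when $\phi(0)=0$ is already covered by Theorem~\ref{th:b(0)0, phi(0)0}, so it need not be re-proved.
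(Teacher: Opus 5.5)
Your proposal is correct and matches the paper's proof essentially step for step. Like the paper, you write $b=zb_1$, use $\HH(b)=\C\oplus z\HH(b_1)$ from \cite[Thm.~18.8]{Hb2-book}, place the constant in $\HH(z(b_1\circ\phi))$ directly, and handle $\phi\,(h\circ\phi)$ by Lemma~\ref{le:my pet result} with $\psi=b_1$; your closed-graph remark on boundedness is a harmless addition.
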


\begin{proof}
	Write $b(z)=zb_1(z)$, where $b_1 \in H^{\infty}_{1}$. Then~\eqref{eq:b(0)=0} is equivalent to
\begin{equation}\label{eq:b(0)=0bis}
	C_\phi(\HH(b))\subset \HH\left(z (b_1\circ\phi)\right).
\end{equation}
Again, from \cite[Thm.~18.8]{Hb2-book}, we have the orthogonal decompositions
\begin{equation}\label{eq:orth}
	\HH(b)=\C\oplus z\HH(b_1), \quad \HH\left(z (b_1\circ\phi)\right)=\C\oplus z\HH(b_1\circ\phi).
\end{equation}

Any $f\in\HH(b)$ can be decomposed as $f(z)=f(0)\oplus zh(z)$ with $h\in\HH(b_1)$, and so
\[
C_\phi(f)=f(0)+\phi(z)h(\phi(z)).
\]
The first term above belongs to $\HH(z(b_1\circ\phi))$ by~\eqref{eq:orth}, so it suffices to prove that $\phi (h\circ\phi)\in  \HH(z(b_1\circ\phi))$. But last fact follows from Lemma~\ref{le:my pet result} by setting $\psi =b_1$.	
\end{proof}


Theorems~\ref{th:the general result} and~\ref{th:b(0)0}, together with Theorem \ref{th:main}, complete the proof of the announced Theorem \ref{MainT}.

\begin{remark}
The assumptions in Theorems \ref{th:main} and \ref{th:b(0)0}  are not exclusive. Indeed, suppose that $ b(0)=0 $ and $\phi$ is nonextreme, in which case $ \HH(\phi) $ contains the constant functions (Lemma \ref{Hbcopnstants}). Then, according to Theorem \ref{th:main}, $C_{\phi}$ maps $\HH(b)$ to $\HH(b \circ \phi)$. Moreover, according to 
Theorem \ref{th:b(0)0}, $C_{\phi}$ maps $\HH(b)$ to $\HH(z \frac{b \circ \phi}{\phi}).$ 
In particular, if $\phi(0) \not = 0$, the two spaces $\HH(b \circ \phi)$ and $\HH(z \frac{b \circ \phi}{\phi})$ are a priori not comparable.
\end{remark}


\section{ The structure of $C_\phi(\HH(b))$}\label{Structure}

In this section, we focus our attention on the case where $\phi $ is an  inner function and $ \phi(0)=0 $. This restriction  allows for a more detailed characterization of how $C_\phi$ acts on $\HH(b)$. Under these hypotheses, $ C_\phi $ is an isometry on $ H^2 $  \cite{MR223914}, 
enabling us to define the closed subspace 
$$\YY_\phi := C_{\phi}(H^2).$$ 
Although the following lemma is likely known, we provide a proof here for completeness as we were unable to find it in the literature. From our earlier discussion in the proof of Lemma \ref{Hbcopnstants},  $T_{w}$ denotes the Toeplitz operator on $H^2$ with the symbol $w \in L^{\infty}(\frac{d \theta}{2 \pi})$. 

\begin{lemma}\label{le:mistery identity}
	Let $\phi$ be inner with $\phi(0)=0$ and $\alpha\in H^\infty$. Then
	\begin{equation}\label{eq:mystery}
			T_{\overline{\alpha\circ\phi}}C_\phi=C_\phi T_{\overline\alpha}
	\end{equation}
Consequently, $\YY_\phi^\perp = H^2 \ominus \mathcal{Y}_{\phi}$ is invariant with respect to multiplication by the function $\alpha\circ\phi$.
\end{lemma}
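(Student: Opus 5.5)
We prove \eqref{eq:mystery} by testing both sides against reproducing kernels of $H^2$. Let $k_\lambda(z) = 1/(1-\overline{\lambda}z)$ be the Szeg\H{o} kernel. Since both sides of \eqref{eq:mystery} are bounded operators on $H^2$, it suffices to verify that their adjoints agree on the dense set of kernels. Taking adjoints, \eqref{eq:mystery} becomes
\[
C_\phi^{*}T_{\alpha\circ\phi} = T_{\alpha}C_\phi^{*} .
\]
Here $T_{\alpha\circ\phi}$ and $T_\alpha$ are simply multiplication operators, because the symbols are analytic. Thus the statement is equivalent to the intertwining relation $C_\phi^*\big((\alpha\circ\phi) g\big) = \alpha\, C_\phi^* g$ for all $g\in H^2$.

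We prefer to verify the original form directly, using the inner product. Fix $f,g\in H^2$. Then
\[
\langle T_{\overline{\alpha\circ\phi}}C_\phi f, g\rangle = \langle f\circ\phi, (\alpha\circ\phi) g\rangle, \qquad \langle C_\phi T_{\overline\alpha} f, g\rangle = \langle T_{\overline\alpha}f, C_\phi^* g\rangle = \langle f, \alpha\, C_\phi^*g\rangle .
\]
By linearity and density it suffices to take $g$ ranging over a set whose span is dense. The cleanest choice is functions of the form $g = (h\circ\phi)\,\phi^{\perp}$-pieces, via the decomposition $H^2=\bigoplus_{j\ge0} e_j\, C_\phi(H^2)$ coming from the Wold-type structure. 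Here $e_j$ is an orthonormal basis of $\KK_\phi$, and we use that $C_\phi$ is an isometry (the inner function $\phi$ vanishes at $0$).

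A more computational route uses the fact that $C_\phi$ is an isometry, together with the key property that, for $\phi$ inner with $\phi(0)=0$, the boundary values of $\phi$ push $m$ forward to $m$. Because of this, for $u,v\in L^2$ we have $\int (u\circ\phi)\,\overline{(v\circ\phi)}\,dm=\int u\bar v\,dm$. This lets us define $C_\phi$ on all of $L^2(\T)$ as an isometry, and it commutes with multiplication: $C_\phi(\bar\alpha f) = \overline{\alpha\circ\phi}\,(f\circ\phi)$.

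The main step, and the expected obstacle, is then compatibility with the Riesz projection $P_+$, namely
\[
P_+ C_\phi = C_\phi P_+ \quad\text{on } L^2 .
\]
First, $C_\phi$ maps $H^2$ into $H^2$. Second, it maps $\overline{zH^2}$ into $\overline{zH^2}$: conjugates of analytic functions vanishing at $0$ compose with $\phi$ to give conjugates of analytic functions vanishing at $0$, since $\phi(0)=0$. With this in hand, for $f\in H^2$ we get
\[
C_\phi T_{\overline\alpha} f = C_\phi P_+(\bar\alpha f) = P_+ C_\phi(\bar\alpha f) = P_+\big(\overline{\alpha\circ\phi}\,(f\circ\phi)\big) = T_{\overline{\alpha\circ\phi}}C_\phi f ,
\]
which proves \eqref{eq:mystery}. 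One must also check that the $L^2$ extension of $C_\phi$ is given by boundary composition and agrees with the $H^2$ one; this follows from the measure-preservation, which is obtained by checking the Fourier moments $\int \phi^n\,dm=\phi(0)^n=0$ for $n\ge1$.

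For the consequence, take $g\in\YY_\phi^\perp$ and $f\in H^2$. Using \eqref{eq:mystery},
\[
\langle (\alpha\circ\phi)g, C_\phi f\rangle = \langle g, T_{\overline{\alpha\circ\phi}}C_\phi f\rangle = \langle g, C_\phi T_{\overline\alpha}f\rangle = 0 ,
\]
so $(\alpha\circ\phi)g\in\YY_\phi^\perp$.
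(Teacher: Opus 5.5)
Your argument is correct, but it is organized differently from the paper's.

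\textbf{The paper's route.} It checks \eqref{eq:mystery} directly for polynomial $\alpha$. It then passes to $\alpha$ analytic on a neighborhood of $\overline{\D}$, and reaches general $\alpha\in H^\infty$ through the dilations $\alpha_r$ and weak operator topology convergence. The fact hidden in the polynomial check is that $P_+\bigl(\overline{\phi}^{\,j}\phi^k\bigr)$ equals $\phi^{k-j}$ for $k\ge j$ and $0$ otherwise, because $\overline{\phi^{\,j}}\in\overline{zH^2}$.

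\textbf{Your route.} You extend $C_\phi$ to an isometry of $L^2(\T)$ using that the boundary map of $\phi$ preserves $m$. You note that this extension preserves both $H^2$ and $\overline{zH^2}$, hence commutes with $P_+$, and that it is multiplicative. Then \eqref{eq:mystery} follows in one line.

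\textbf{What each approach buys.} Your route needs no limiting argument. The paper's dilation step tacitly also requires $T_{\overline{\alpha_r\circ\phi}}\to T_{\overline{\alpha\circ\phi}}$ in the weak operator topology, which you never need. Your computation also proves more: $C_\phi T_w=T_{w\circ\phi}C_\phi$ for every $w\in L^\infty(\T)$, not only $w=\overline{\alpha}$ with $\alpha\in H^\infty$. The cost is the boundary-value machinery.

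\textbf{A step you should write out.} Your Fourier-moment check of measure preservation is right. You also use, but only assert, that $(f\circ\phi)^*=f^*\circ\phi^*$ a.e.\ for $f\in H^2$, where $^*$ denotes boundary values. To prove it, approximate $f$ in $H^2$ by polynomials $p_k$. Then $(p_k\circ\phi)^*=p_k\circ\phi^*$ trivially. Littlewood's theorem gives $p_k\circ\phi\to f\circ\phi$ in $H^2$, and measure preservation gives $p_k\circ\phi^*\to f^*\circ\phi^*$ in $L^2$. Comparing the two limits gives the identity.

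Your deduction of the invariance of $\YY_\phi^\perp$ is the same adjoint argument as in the paper.

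Finally, your opening paragraphs (testing adjoints on kernels, and the Wold-type decomposition with the ``$\phi^{\perp}$-pieces'') are never carried out and play no role. Delete them, so that the proof consists only of the $L^2$-extension argument.
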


\begin{proof}
	The formula in \eqref{eq:mystery} can be quickly verified when $\alpha$ is a polynomial. From here, the formula can be extended to functions $\alpha$ that are analytic in a neighborhood of $\overline{\D}$. Finally, if $\alpha_r(z):=\alpha(rz)$, where $z \in \D$ and  $0 < r < 1$, is a dilation of $\alpha$, then $T_{\overline{\alpha_r}}\to T_{\overline\alpha} $ in the weak operator topology, and so we can extend~\eqref{eq:mystery} to any $\alpha\in H^\infty$.
	It follows from~\eqref{eq:mystery}  that $\YY_\phi$ is an invariant subspace for $T_{\overline{\alpha\circ\phi}} $, whence $\YY_\phi^\perp$ is an invariant subspace for its adjoint $T_{\alpha\circ\phi}$ (which is multiplication by $\alpha\circ\phi$).
	\end{proof}

For typesetting purposes, we set 
$$\|f\| := \|f\|_{H^2}.$$
Recall the following formula \cite[Sec.~16.9]{Hb2-book} for the norm in an $\HH(b)$ space:
\begin{equation}\label{eq:norm in H(b)}
\| f \|_{\HH(b)}^2=\sup_{g\in H^2}(\|  f+bg \|^2-\|  g \|^2).
\end{equation}
From Corollary \ref{co:who contains the constants} and the assumption that $\phi(0) = 0$,  we know  that 
$$ C_\phi (\mathcal{H}(b))\subset  \mathcal{H}(b \circ \phi).$$

\begin{theorem}\label{th:compo inner}
	Let $b \in H^{\infty}_{1}$ and $\phi$ be an inner function such that $\phi(0) = 0$. Then we have the following. 
\begin{enumerate}
    \item  $C_\phi$ is an isometry from $\mathcal{H}(b)$ into $\mathcal{H}(b \circ \phi)$.
    \item We have 
    $$C_\phi(\HH(b))=\HH(b\circ\phi)\cap\YY_\phi$$ and the space $\HH(b\circ\phi)$ can be orthogonally decomposed as
		\begin{equation}\label{eq:decomp H(b circ phi)}
			\HH(b\circ\phi)= C_\phi(\HH(b))\oplus_{	\HH(b\circ\phi)} 	\left(\HH(b\circ\phi)\cap\YY_\phi^\perp\right).
		\end{equation}
\end{enumerate}		 
\end{theorem}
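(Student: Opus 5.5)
The plan is to work entirely with the sup-formula \eqref{eq:norm in H(b)} for the norm and with the orthogonal decomposition $H^2=\YY_\phi\oplus\YY_\phi^\perp$. Since $\phi$ is inner with $\phi(0)=0$, $C_\phi$ is an isometry of $H^2$ onto $\YY_\phi$. Moreover, for $g\in H^2$ we have $C_\phi(bg)=(b\circ\phi)(g\circ\phi)$.

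\emph{Step 1: the two norms agree on the range.} For $f\in\HH(b)$ and $g\in H^2$ the isometry gives
\[
\|f+bg\|^2-\|g\|^2=\|f\circ\phi+(b\circ\phi)(g\circ\phi)\|^2-\|g\circ\phi\|^2 .
\]
Taking the supremum over $g$ gives $\|f\|_{\HH(b)}^2\le \|f\circ\phi\|^2_{\HH(b\circ\phi)}$. Corollary~\ref{co:who contains the constants} supplies the reverse inequality, so $C_\phi$ is an isometry. This proves (a).

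\emph{Step 2: the key estimate for general $G$.} To understand arbitrary $G\in H^2$ in the sup for $\HH(b\circ\phi)$, split $G=g\circ\phi+G^\perp$ with $G^\perp\in\YY_\phi^\perp$. By Lemma~\ref{le:mistery identity}, $(b\circ\phi)G^\perp\in\YY_\phi^\perp$. So if $F=F_1+F_2$ with $F_1\in\YY_\phi$ and $F_2\in\YY_\phi^\perp$, then
\[
\|F+(b\circ\phi)G\|^2-\|G\|^2=\bigl(\|F_1+(b\circ\phi)(g\circ\phi)\|^2-\|g\circ\phi\|^2\bigr)+\bigl(\|F_2+(b\circ\phi)G^\perp\|^2-\|G^\perp\|^2\bigr).
\]
The two suprema decouple. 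Choosing $G^\perp=0$ or $g=0$ shows that each bracket is bounded by $\|F\|_{\HH(b\circ\phi)}^2$. Hence both $F_1$ and $F_2$ lie in $\HH(b\circ\phi)$, and
\[
\|F\|_{\HH(b\circ\phi)}^2=\|F_1\|_{\HH(b\circ\phi)}^2+\|F_2\|_{\HH(b\circ\phi)}^2 .
\]
The first bracket only involves $G\in\YY_\phi$, and for $F_1\in \YY_\phi$ I expect the supremum over $G\in H^2$ to reduce to one over $\YY_\phi$ by the same splitting. Therefore the orthogonal projection onto $\YY_\phi$ maps $\HH(b\circ\phi)$ into itself, and it is an orthogonal projection there as well, since the norm identity above is a Pythagorean relation.

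\emph{Step 3: identify the range.} Now write $F_1=f\circ\phi$ with $f\in H^2$ and run the computation of Step 1 backwards: $\|f\|_{\HH(b)}=\|F_1\|_{\HH(b\circ\phi)}<\infty$, so $f\in\HH(b)$. Hence $\HH(b\circ\phi)\cap\YY_\phi\subset C_\phi(\HH(b))$. The reverse inclusion is immediate, since $C_\phi(\HH(b))\subset\HH(b\circ\phi)$ and $C_\phi$ maps into $\YY_\phi$. Combining this with Step 2 gives the decomposition \eqref{eq:decomp H(b circ phi)}.

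The main obstacle is Step 2: one must check that the decoupling of the two suprema is legitimate. This rests on Lemma~\ref{le:mistery identity}, which guarantees that $(b\circ\phi)\YY_\phi^\perp\subset\YY_\phi^\perp$ (and trivially $(b\circ\phi)\YY_\phi\subset\YY_\phi$), so that no cross terms survive. The polarization for orthogonality in $\HH(b\circ\phi)$ then follows from the additive norm identity.
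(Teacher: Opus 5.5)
Your proposal is correct and follows essentially the paper's route: the sup formula \eqref{eq:norm in H(b)}, the splitting $H^2=\YY_\phi\oplus\YY_\phi^\perp$, and Lemma~\ref{le:mistery identity} to decouple the two suprema. The main difference is in (a), where you take the upper bound from Corollary~\ref{co:who contains the constants} rather than reading it off the decoupled formula. Also, the reduction you say you ``expect'' is exactly the paper's computation, and it should be stated for $F_2$ as well as $F_1$: the complementary bracket $\|(b\circ\phi)G'\|^2-\|G'\|^2$ is $\leq 0$ because $|b\circ\phi|\leq 1$, and it vanishes at $G'=0$.
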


\begin{proof}
For an arbitrary $F\in \HH(b\circ\phi)$, decompose it orthogonally in $H^2$ as 
$$F=f\circ \phi+u, \quad f\circ \phi\in \YY_\phi, \quad u\in\YY_\phi^\perp.$$ From \eqref{eq:norm in H(b)} we have
	\[
		\|F\|^2_{\mathcal{H}(b \circ \phi)}=\sup_{H\in H^2}\left( \|F+(b\circ \phi) H\|^2-\|H\|^2\right).
	\]
	Decomposing $H \in H^2$ as 
    $$H=h\circ \phi+v, \quad h\circ \phi\in \YY_\phi, \quad v\in\YY_\phi^\perp,$$ we have, again using \eqref{eq:norm in H(b)},
	\[
	\begin{split}
		\|F\|^2_{\mathcal{H}(b \circ \phi)}&= \sup_{\substack{h \in H^2 \\ v \perp \YY_{\phi}}} \left(\| f\circ\phi+u+(b\circ\phi)(h\circ \phi+v)\|^2-\|h\circ\phi+v\|^2\right)\\
		&=\sup_{\substack{h \in H^2 \\ v \perp \YY_{\phi}}} \left(\| (f+bh)\circ\phi+u+(b\circ\phi)v\|^2-\|h\circ\phi\|^2-\|v\|^2\right).
	\end{split}
	\]
	By Lemma~\ref{le:mistery identity}, $u+(b\circ \phi)v\in \YY_\phi^\perp$, and so the equality above continues as 
\begin{align*}
& = \sup_{\substack{h \in H^2 \\ v \perp \YY_{\phi}}} \left(\| (f+bh)\circ\phi\|^2+\|u+(b\circ\phi)v\|^2-\|h\circ\phi\|^2-\|v\|^2\right)\\
	&=\sup_{\substack{h \in H^2 \\ v \perp \YY_{\phi}}} \left(\left(\| (f+bh)\circ\phi\|^2-\|h\circ\phi\|^2\right) + \left(\| u+(b\circ \phi)v\|^2-\|v\|^2\right)\right).
\end{align*}
	The first parentheses in the previous line depend only on $ h $, while the second depends only on $ v $. Thus,  we actually have
		\begin{align}\label{eq:suprema}
			\|F\|^2_{\mathcal{H}(b \circ \phi)} & =
		\sup_{h\in H^2} \left(\| (f+bh)\circ\phi\|^2-\|h\circ\phi\|^2\right)\\
		& \quad \quad  +\sup_{v\perp\YY_\phi} \left(\| u+(b\circ \phi)v\|^2-\|v\|^2\right).\nonumber
		\end{align}
Note that both suprema on the right hand side of the previous line are bounded above by $\|F\|^2_{\mathcal{H}(b \circ \phi)}$.

	Since $C_\phi$ acts isometrically on $H^2$, the first supremum in~\eqref{eq:suprema} is
	\begin{equation}\label{eq:first supremum}
			\sup_{h\in H^2} \left(\| (f+bh) \|^2-\|h \|^2_2\right)=\|f\|^2_{\HH(b)}.
	\end{equation}
	In particular, we see that if  $F =   f\circ\phi \in \HH(b\circ\phi) $, then $f \in \HH(b)$ ($u = 0$) and
	\[
	\|f\|^2_{\HH(b)}=\|C_\phi f\|^2_{\HH(b\circ\phi)},
	\]
	which proves (a).

 For the second supremum on the right hand side of \eqref{eq:suprema}, we know that 
	\[
	\| u\|_{\HH(b\circ\phi)}^2=\sup_{h \in H^2}\big( \| u+(b\circ\phi)h\|^2-\|h\|^2\big).
	\]
Now write $h=g\circ \phi+w$ and again use the invariance of $\YY_\phi^\perp$ with respect to multiplication by $b\circ\phi$ (Lemma~\ref{le:mistery identity})  to obtain the orthogonal decompositions
	\[
	\begin{split}
			\| u\|_{\HH(b\circ\phi)}^2&=\sup_{g,w} \left(\| (bg)\circ\phi+u+(b\circ \phi)w\|^2-\|w+g\circ\phi\|^2\right)\\
&		=\sup_{g,w} \left(\|(bg)\circ\phi\|^2+\| u+(b\circ\phi)w\|^2-\|w\|^2-\|g\circ\phi\|^2\right)\\
&=\sup_{g,w}\left((\|bg\|^2-\|g\|^2)+\left(\| u+(b\circ\phi)w\|^2-\|w\|^2\right)\right). 
	\end{split}
		\]
	Since $|b| \leq 1$ on $\D$, the expression in the first parentheses of the last equation is nonpositive, and so we obtain
		\[
			\| u\|_{\HH(b\circ\phi)}^2\leq \sup_w \left(\| u+(b\circ\phi)w\|^2-\|w\|^2\right) .
		\]
		In fact, we have 
	\begin{equation}\label{eq:second supremum}
			\| u\|_{\HH(b\circ\phi)}^2=\sup_w \left(\| u+(b\circ\phi)w\|^2-\|w\|^2\right)
	\end{equation}
	since we may choose $g=0$ in the above. Thus,  $ u\in \HH(b\circ\phi) $. Moreover, combining \eqref{eq:suprema}, \eqref{eq:first supremum}, and \eqref{eq:second supremum}, we obtain the equality
	\[
		\|F\|^2_{\mathcal{H}(b \circ \phi)}=\| f\circ\phi\|^2_{\HH(b\circ\phi)}+	\| u\|_{\HH(b\circ\phi)}^2.
	\]
	It follows that the orthogonal decomposition $F=f\circ\phi+u$  in $H^2$  is also an orthogonal decomposition in $\HH(b\circ\phi)$. This verifies \eqref{eq:decomp H(b circ phi)} and thus completes the proof. 
	\end{proof}

\section{Final remarks}\label{FRem}

\subsection*{Minimality}
The second part of Theorem \ref{Javad} establishes a minimality condition satisfied by $\eta$ defined in cases (a), (b) or (c) when $b$ and $\phi$ are inner. It would be desirable to obtain similar results for general $\phi$ and $b$.  
The following examples demonstrate the complexities that arise.

	\begin{example}
	    Suppose 
	$$b(z)=z^2  \; \; \mbox{and} \; \; \phi(z)=\tfrac{1}{2} z.$$ We have $\HH(b)=\HH(z^2)$, which is a two dimensional model space $\mathcal{K}_{z^2}$ with a basis formed by $1$ and $z$ \cite[p.~116]{MR3526203} and it is easy to check that  $C_\phi(\HH(b))\subset \HH(b)$. 
	On the other hand, the target space given by Theorem \ref{MainT} (observing that $b(0) = 0$), is $\HH(\eta)$, where
	\[
	\eta(z) :=z\frac{z^2/4}{z/2}=\tfrac{1}{2} z^2.
	\]	However, since $\sup_{z \in \D} |\eta(z)|<1$, we have $\HH(\eta)=H^2$ (with an equivalent norm), which is not a subspace of the two dimensional space $ \HH(b)=\HH(z^2)$.
	\end{example}

    When $b$ is a rational function, the paper \cite{MR3503356} contains  a precise description of $\HH(b)$. Without stating the general case, we will use its consequence several times in the next example.


	\begin{example}
	Let
    $$ \phi(z)=\tfrac{1}{2} (1 - z) \; \; \mbox{and} \; \;  b(z)= \tfrac{1}{2} (1 + z).$$ Since $ \phi $ is nonextreme, $ \HH(\phi) $ contains the constant functions (Lemma \ref{Hbcopnstants}). Thus, the target space given by Theorem~\ref{MainT} is $\HH(\eta)$, where
		\[
		\eta(z):= b(\phi(z))= \tfrac{1}{4}(3-z).
		\]
By \cite[Thm.~1.6]{MR3503356}, we obtain 
    $$\HH(\eta)=(z+1)H^2+\C.$$

On the other hand, the results in \cite{MR4653340} provide various sufficient conditions for $ C_\phi(\HH(b))\subset \HH(b) $. In particular
	\cite[Cor.~4.12]{MR4653340}, this is true for the above functions $ b $ and $ \phi $, and therefore, from  \cite[Thm.~1.6]{MR3503356}, we have $$C_\phi \HH(b)\subset \HH(b)=(z-1)H^2+\C.$$
	The two spaces $ \HH(b) $ and $ \HH(\eta) $ above are not comparable. Indeed, one can see  that all functions in $ \HH(b) $ have nontangential limits at $ 1 $, while all functions in $ \HH(\eta) $ have nontangential limits at $ -1 $, which allows to construct functions that are in one of the spaces but not in the other. Therefore, neither $\HH(b)$ nor $\HH(\eta)$ is minimal. 
		
		In fact, one can obtain a bit more. We know \cite[Thm.~2.4]{FricainOstermann} that  $$ C_\phi(\HH(b))\subset \HH(b)\cap\HH(\eta) = (1 - z^2) H^2 + \operatorname{span}\{1, z\},$$ and it turns out  \cite[Thm.~1.6]{MR3503356} that the last intersection is also a de Branges--Rovnyak space $\HH(\beta)$ with
		\[
		\beta(z)=\frac{2+\sqrt{3}}{4}+\frac{2-\sqrt{3}}{4}z^2.
		\]

Finally, we note that  $C_{\phi}(\HH(b))$ is actually much smaller than $\HH(\beta)$. To see this, observe that $\phi$ maps $\D$ to an internally tangent disk at $-1$ and so all functions in 
$$C_{\phi}(\HH(b)) = (1 + z)\{f \circ \phi: f \in H^2\} + \C$$ are analytic at all points of $\T \setminus \{-1\}$ and admit nontangential limits at $-1$. However, not all functions in
$\HH(\beta) = (1 - z^2) H^2 + \operatorname{span}\{1, z\}$ are subject to this restriction.

Both examples above show that the spaces obtained through our results concerning $\HH(\eta)$ in the inclusion $C_{\phi} (\HH(b)) \subset \HH(\eta)$ may be far from optimal. The possibility
of an optimal space, as well as identifying it, remains an open problem. Notice that 
the inclusion $\HH(b_1) \subset \HH(b_2)$ for general $b_1, b_2 \in H^{\infty}_{1}$, has been characterized by Ball and Kriete in \cite{MR868573} (see also \cite[Ch.~27]{Hb2-book}). However, the precise condition is quite technical, making the question of minimality in our discussion very challenging. 
%
\end{example}

\subsection*{Invariance of $\HH(b)$}
In light of our previous results, when does $C_{\phi}$ map $\HH(b)$ to itself? When $\phi$ and $b$ are inner functions, this was explored in \cite[Theorem 4.1]{MR3176147} and expanded to rational $b$ in \cite{MR4653340}. In the most general setting,  it does not seem possible to give necessary conditions as long as the question of the minimality of $\HH(\eta)$ remains unsettled. However, using the following result involving {\em contractive} inclusions from \cite[Thm.~27.12]{Hb2-book}, we can establish some sufficient conditions. 

\begin{lemma}\label{le:FM th 27.12}
	Let $ b_1, b_2  \in H^{\infty}_{1}$. Then $ \HH(b_2) $ is contractively contained in $  \HH(b_1) $ if and only if $ b_1=b_2b $ for some $ b \in H^{\infty}_{1}$.
\end{lemma}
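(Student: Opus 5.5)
Both directions come from the kernel criterion in Corollary~\ref{co:inclusion between RKHS} with the constant $M=1$. We use the refinement of Lemma~\ref{le:basic}(c): the inclusion $\HH_{K_2}\subset\HH_{K_1}$ is contractive if and only if $K_1-K_2\gg 0$. This is the case $\phi=\mathrm{id}$, where the norm of the inclusion is the smallest admissible $M$.

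For the ``if'' direction, suppose $b_1=b_2b$ with $b\in H^\infty_1$. The key computation is
\[
\frac{1-\overline{b_1(\lambda)}b_1(z)}{1-\overline\lambda z}-\frac{1-\overline{b_2(\lambda)}b_2(z)}{1-\overline\lambda z}
=b_2(z)\overline{b_2(\lambda)}\,\frac{1-\overline{b(\lambda)}b(z)}{1-\overline\lambda z}.
\]
The right-hand side is a product of the positive kernel $b_2(z)\overline{b_2(\lambda)}$ (Lemma~\ref{le:basic}(f)) and the $\HH(b)$ kernel (Lemma~\ref{le:basic}(d)). By Lemma~\ref{le:basic}(a) it is positive, so $k^{b_1}-k^{b_2}\gg0$ and the inclusion is contractive.

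For the ``only if'' direction, contractive containment gives $k^{b_1}-k^{b_2}\gg0$, that is,
\[
\frac{b_2(z)\overline{b_2(\lambda)}-b_1(z)\overline{b_1(\lambda)}}{1-z\overline\lambda}\gg 0 .
\]
We need to produce $b\in H^\infty_1$ with $b_1=b_2b$, and this is a Leech-type factorization problem. There are two natural routes.

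The first is to invoke the Leech theorem (commutant lifting) directly. The kernel positivity says exactly that $T_{b_1}T_{b_1}^*\le T_{b_2}T_{b_2}^*$ on $H^2$, since $T_{\bar a}k_\lambda=\overline{a(\lambda)}k_\lambda$ with $k_\lambda$ the Szeg\H{o} kernel. Douglas' lemma then yields a contraction $X$ with $T_{b_1}=T_{b_2}X$, and $X$ can be taken with $\ker X\supset\ker T_{b_1}$ and range in $\overline{\operatorname{ran}T_{b_2}^*}$. We then have to show that $X$ commutes with the shift, so that it is $T_b$ with $\|b\|_\infty\le1$. For this we compare $T_{b_2}XS=T_{b_1}S=ST_{b_1}=ST_{b_2}X=T_{b_2}SX$; when $T_{b_2}$ is injective (true unless $b_2\equiv0$, a trivial case) this gives $XS=SX$.

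The second route is a lurking-isometry argument: rewrite the positivity as the Gram identity
\[
\langle b_2(z)\cdot,\cdot\rangle\ \text{kernel} = b_1\text{ kernel} + (1-z\overline\lambda)\times\text{positive},
\]
and define an isometry from it.

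The main obstacle is exactly this ``only if'' step, in particular making the commutation argument rigorous. Since the paper cites \cite[Thm.~27.12]{Hb2-book}, I would simply rely on the Douglas lemma plus the injectivity of $T_{b_2}$ to get $X=T_b$, which is the cleanest route.
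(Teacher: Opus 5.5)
The paper gives no proof of its own here; it imports the lemma from \cite[Thm.~27.12]{Hb2-book}. Your proof is correct and self-contained.

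The ``if'' direction is exactly the kernel calculus of Lemma~\ref{le:basic}, in the same spirit as the kernel factorizations in Theorems~\ref{th:main} and~\ref{th:b(0)0, phi(0)0}.

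The ``only if'' direction is already complete as written:
\begin{enumerate}
\item Positivity of $k^{b_1}-k^{b_2}$ gives $T_{b_1}T_{b_1}^*\le T_{b_2}T_{b_2}^*$. Test the quadratic form on finite combinations of Szeg\H{o} kernels, which are dense, and extend by boundedness.
\item Douglas' lemma then gives a contraction $X$ with $T_{b_1}=T_{b_2}X$.
\item Your identity $T_{b_2}(XS-SX)=0$, together with injectivity of $T_{b_2}$, gives $XS=SX$.
\item Hence $X=T_b$ with $\|b\|_\infty=\|X\|\le 1$, and $b_1=T_{b_1}(1)=T_{b_2}T_b(1)=b_2b$.
\end{enumerate}
So the step you call the main obstacle is not one. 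In the scalar case you need no Leech theorem, commutant lifting, or lurking isometry. The extra conditions on $\ker X$ and the range of $X$ are also superfluous, because injectivity of $T_{b_2}$ already makes $X$ unique.

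You should state the degenerate case explicitly. If $b_2\equiv 0$, positivity of $-b_1(z)\overline{b_1(\lambda)}/(1-z\overline{\lambda})$ on the diagonal forces $b_1\equiv 0$, and then any $b$ works.

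Compared with the paper: the citation buys brevity. Your route shows the lemma follows from the reproducing-kernel tools of Section~\ref{RKfacts} plus two standard operator-theoretic facts, namely Douglas' lemma and the description of the commutant of the shift.
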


 The next theorem, which follows from Theorem \ref{MainT}, provides some sufficient conditions as mentioned above. 

\begin{theorem}\label{th:invariance}
	Suppose $ b, \phi  \in H^{\infty}_1$. Define $ \eta  \in H^{\infty}_{1}$ as follows:
	\begin{enumerate}
		\item  If $ b(0)=0 $, then $$ \eta(z)=z\frac{b(\phi(z))}{\phi(z)}.$$
		
		\item If $ \HH(\phi) $ contains the constants, then $ \eta(z)=b(\phi(z)) $.
		
		\item If neither (a) nor (b) holds, then $ \eta(z)=zb(\phi(z)) $.
		
	\end{enumerate}
If 
\begin{equation}\label{eq:quasikonig}
 b=\eta \widetilde{b}
\end{equation}
for some $\widetilde{b} \in H^{\infty}_{1}$, then $ C_\phi(\HH(b))\subset \HH(b) $.

\end{theorem}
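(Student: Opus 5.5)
The argument is a two-step composition of inclusions. In each of the three cases, Theorem~\ref{MainT} already places $C_\phi(\HH(b))$ inside $\HH(\eta)$. In case (a) we use part (b) of Theorem~\ref{MainT}. In case (b) we use part (c), equivalently Theorem~\ref{th:main}. In case (c) we use part (a), equivalently Theorem~\ref{th:the general result}. The only remaining task is to show that the factorization \eqref{eq:quasikonig} gives $\HH(\eta)\subset\HH(b)$. Combining the two inclusions then yields $C_\phi(\HH(b))\subset\HH(\eta)\subset\HH(b)$.

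For the inclusion step I would apply Lemma~\ref{le:FM th 27.12} with $b_1=b$ and $b_2=\eta$. The hypothesis $b=\eta\widetilde b$ with $\widetilde b\in H^\infty_1$ is exactly what that lemma requires for $\HH(\eta)$ to be contractively contained in $\HH(b)$. The inclusion can also be checked directly at the level of kernels via Corollary~\ref{co:inclusion between RKHS}. Writing $\eta(\lambda)$ for the value of $\eta$ at $\lambda$ (not the product $\eta\overline{\lambda}$), we have
\[
\frac{1-\overline{b(\lambda)}b(z)}{1-\overline\lambda z}-\frac{1-\overline{\eta(\lambda)}\eta(z)}{1-\overline\lambda z}
=\overline{\eta(\lambda)}\eta(z)\,\frac{1-\overline{\widetilde b(\lambda)}\widetilde b(z)}{1-\overline\lambda z}.
\]
The right-hand side is positive by Lemma~\ref{le:basic}, parts (a), (d) and (f). So the kernel difference is positive with $M=1$.

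Boundedness of $C_\phi:\HH(b)\to\HH(b)$ follows from this chain. The first map is bounded by Theorem~\ref{MainT}, and the second is a contractive inclusion. Alternatively, boundedness follows from the closed graph theorem, as noted after Corollary~\ref{co:inclusion between RKHS}.

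There is no genuine obstacle. The one point requiring care is that, in each case, $\eta$ is a function in $H^\infty_1$, so that Lemma~\ref{le:FM th 27.12} applies. In case (a), $\eta=z\,(b_1\circ\phi)$ where $b=zb_1$ with $b_1\in H^\infty_1$, so $\eta\in H^\infty_1$. In the other two cases $\eta\in H^\infty_1$ is immediate. When the hypotheses of (a) and (b) overlap, either choice of $\eta$ works, since the argument uses only the inclusion supplied by the corresponding part of Theorem~\ref{MainT}.
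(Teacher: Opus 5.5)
Your proposal is correct and follows the paper's route. In each case Theorem~\ref{MainT} gives $C_\phi(\HH(b))\subset\HH(\eta)$, and the factorization $b=\eta\widetilde b$ gives the contractive inclusion $\HH(\eta)\subset\HH(b)$ via Lemma~\ref{le:FM th 27.12}; your kernel computation is a correct direct verification of the easy direction of that lemma.
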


\begin{example}
The following example is an application of Theorem \ref{th:invariance}. 
Consider $$ \phi(z)=\tfrac{1}{2}(1 + z) \; \;  \mbox{and} \; \;   b(z)=e^{z-1}.$$
Since $b$ is not a rational function, the invariance of $ \HH(b) $ under composition by $ \phi $ cannot be obtained  through the concrete description of $ \HH(b) $ as in \cite[Thm.~1.6]{MR3503356}.
With this choice of $b$ and $\phi$ we have 
	$$
	\eta(z)=b(\phi(z))=e^{\frac{z-1}{2}} \; \; \mbox{and} \; \; 
	b(z)=\eta(z)e^{\frac{z-1}{2}},
	$$
	and so \eqref{eq:quasikonig} is verified with $\widetilde{b}(z)=e^{\frac{z-1}{2}} $. Therefore $ C_\phi(\HH(b))\subset\HH(b) $.

\end{example}	

\subsection*{Statement about the use of AI} Gemini Pro was used in this paper, including generating ideas and performing calculations. The authors assume full responsibility for the content of the work and its final form. 

\bibliographystyle{plain}

\bibliography{references}

\end{document}